\newtheorem{theorem}{Theorem}[section]
\newtheorem{lemma}[theorem]{Lemma}
\newtheorem{proposition}[theorem]{Proposition}
\newtheorem{corollary}[theorem]{Corollary}
\newtheorem{remark}[theorem]{Remark}
\newtheorem{question}[theorem]{Question}
\numberwithin{equation}{section}
\numberwithin{figure}{section}
\def\intave#1{\int_{#1}\hbox{\llap{$\raise2.3pt\hbox{\vrule
height.9pt width7pt}\phantom{\scriptstyle{#1}}\mkern-2mu$}}}
\def\intav#1{\mathchoice
          {\mathop{\vrule width 9pt height 3 pt depth -2.6pt
                  \kern -9pt \intop}\nolimits_{\kern -6pt#1}}%
          {\mathop{\vrule width 5pt height 3 pt depth -2.6pt
                  \kern -6pt \intop}\nolimits_{#1}}%
          {\mathop{\vrule width 5pt height 3 pt depth -2.6pt
                  \kern -6pt \intop}\nolimits_{#1}}%
          {\mathop{\vrule width 5pt height 3 pt depth -2.6pt
                  \kern -6pt \intop}\nolimits_{#1}}}
\def\intav#1{\vint_{#1}}
\title{Concentration breaking on two optimization problems}
\author{Yong Huang}
\address{School of Mathematics, Hunan University, Changsha, Hunan, China.}
\email{huangyong@hnu.edu.cn}
\author{Qinfeng Li%\uppertext{*}
}
\address{School of Mathematics, Hunan University, Changsha, Hunan, China.}
\email{liqinfeng1989@gmail.com}
\author{Qiuqi Li }
\address{School of Mathematics, Hunan University, Changsha, Hunan, China.}
\email{qli28@hnu.edu.cn}
\thanks{Research of Yong Huang was supported by the National Science Fund (No. 11625103, 12171144), Hunan Science and Technology Planning Project(No. 2019RS3016). Research of Qinfeng Li was supported by the National Science Fund for Youth Scholars (No. 12101215). Research of Qiuqi Li was supported by the National Science Fund for Youth Scholars (No. 12101216 ) and the Natural Science Fund of Hunan Province (No. 2022JJ40030). }
\subjclass[2020]{49K20, 49K40, 49R05}
\keywords{Spectral Inequalities, Symmetry Breaking, Laplacian Eigenvalue}
\begin{document}

\begin{abstract}
In the present paper, we study the boundary concentration breaking phenomena on two thermal insulation problems considered on Lipschitz domains, based on Serrin's overdetermined results, perturbation argument and comparison of Laplacian eigenvalues with different boundary conditions. Since neither of the functionals in the two problems is $C^1$, another key ingredient is to obtain the global H\"older regularity of minimizers to both problems on Lipschitz domains. Also, exact dependence on domain of breaking thresholds is also given in the first problem, and the breaking values are obtained in the second problem on ball domains, which are related to $2\pi$ in dimension $2$.

\end{abstract}

\maketitle

\vskip0.2in

%{\bf Key words}: Partial Differential Equations, Spectral Inequalities, Symmetry Breaking, Laplacian Eigenvalue

%{\bf 2020 AMS subject classification}: 49K20, 49K40, 49R05, 53E99, 34B30

\section{Introduction}
The boundary distribution of minimizers is the following two optimization problems from thermal insulation background
\begin{align}
\label{1problem}
    \min\left\{\frac{\int_{\Omega}|\nabla u|^2\,dx+\frac{1}{m}\left(\int_{\partial \Omega}|u|\,d\sigma\right)^2}{\left(\int_{\Omega}u\,dx\right)^2}:u\in H^1(\Omega)\setminus \{0\}\right\},
\end{align}and
\begin{align}
\label{2problem}
    \min\left\{\frac{\int_{\Omega}|\nabla u|^2\,dx+\frac{1}{m}\left(\int_{\partial \Omega}|u|\,d\sigma\right)^2}{\int_{\Omega}u^2\,dx}:u\in H^1(\Omega)\setminus \{0\}\right\},
\end{align}where $\Omega$ represents a thermal body and $m$ represents the total amount of thermal insulation material around $\partial \Omega$. According to \cite{BBN} and \cite{BBN1}, the first problem \eqref{1problem} corresponds to maximizing heat content when the heat source is uniformly distributed, and the second problem is exactly for minimizing decay of temperature when there is no heat source. Let $u_m$ be a solution to \eqref{1problem} or \eqref{2problem}, and define $h^{opt}_m\in L^1(\partial \Omega)$ by
\begin{align}
\label{material}
    h^{opt}_m(\sigma)=\frac{m|u_m(\sigma)|}{\int_{\partial \Omega}|u_m(\sigma)|\,d\sigma},
\end{align}then as shown in \cite{BBN1}, $h^{opt}_m$ gives the optimal distribution of thermal insulation material in both problems. Since $h^{opt}_m$ and $u_m$ are the same on the boundary up to a constant factor, in order to know the distribution of insulation material on the boundary, it suffices to understand the boundary behavior of $u_m$.

Concerning the first problem of maximizing heat content, it is easy to see that if $\Omega$ has sufficient regularity, then up to a constant factor, $u_m$ satisfies the following Euler-Lagrange equation in standard weak sense.
\begin{align}
    \label{EulerLagrangeforuintro}
\begin{cases}
-\Delta u=1 \quad &\mbox{in $\Omega$}\\
\frac{\partial u}{\partial \nu}=-\frac{1}{m}\int_{\partial \Omega}|u| d\sigma &\mbox{on $\partial \Omega \cap \{u\ne 0\}$}\\
\frac{\partial u}{\partial \nu}\in \left[-\frac{1}{m}\int_{\partial \Omega}|u| d\sigma,\frac{1}{m}\int_{\partial \Omega}|u| d\sigma\right] &\mbox{on $\partial \Omega \cap \{u=0\}$}.
\end{cases}    
\end{align}
Given a general domain $\Omega$, $\eqref{EulerLagrangeforuintro}_3$ makes the linear equation difficult to solve, and even numerical method fails since we do not know where $u_m$ vanishes on $\partial \Omega$. There is an interesting example in \cite{Buttazzo}, see also \cite[Example 6.3]{DLW}, saying that if $\Omega$ is an anulus, then there exists $m_1>0$ such that when $m>m_1$, the insulation material covers the whole boundary, while when $m<m_1$, the insulator only concentrates on the inner boundary and leaves the outer boundary unprotected. On the other hand, by \cite{Buttazzo}, see also \cite[Example 2.2]{BBN}, when $\Omega$ is a ball, for any $m>0$, the insulator is always uniformly distributed on $\partial \Omega$, and thus strictly positive. 

The two examples above only involve radially symmetric domains, and it is then natural to ask the following question:
\begin{question}
\label{ques1}
For what kind of shape does the insulator vanish nowhere on the boundary? 
\end{question}
If the question can be answered, then we can classify domains over which the annoying condition $\eqref{EulerLagrangeforuintro}_3$ can no longer be worried about and thus the optimal insulation of material can be figured out on such domains via numerical methods.

In the following, we denote the perimeter of $\Omega$ by $P(\Omega)$, which is equivalent to the $(n-1)$-Hausdorff measure of the reduced boundary of $\Omega$.

We restrict our study to connected domains, and it turns out that whether or not $u_m$ is nowhere vanishing depends not only on the geometry of domain, but also on the given total amount of material. In fact, we prove that a concentration breaking phenomenon happens for any $C^2$ domains which are not balls.
\begin{theorem}
\label{concentration}
Let $\Omega \subset\mathbb{R}^n$ be a connected $C^2$ domain whch is not a ball, and let $u_m$ be a solution to \eqref{1problem}. Then there exists $m_1>0$ which depends only on $\Omega$, such that when $m>m_1$, $|u_m|>0$ on $\partial \Omega$, and when $0<m<m_1$, $u_m$ must vanish on a closed subset of $\partial \Omega$ with positive $\mathscr{H}^{n-1}$ measure.
\end{theorem}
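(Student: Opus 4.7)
The plan is to identify the threshold $m_1$ explicitly in terms of a Neumann auxiliary function on $\Omega$, and then to invoke Serrin's overdetermined principle to conclude that $m_1>0$ exactly when $\Omega$ is not a ball. First I would invoke the global H\"older regularity of minimizers of \eqref{1problem}---one of the key technical outputs of this paper---so that $u_m\in C^{0,\alpha}(\overline{\Omega})$ and the contact set $\{u_m=0\}\cap\partial\Omega$ is a well-defined closed subset of $\partial\Omega$ whose $\mathscr{H}^{n-1}$-measure is meaningful. Replacing $u_m$ by $|u_m|$ and rescaling, I may assume $u_m\geq 0$ and that the Euler-Lagrange system \eqref{EulerLagrangeforuintro} holds with $-\Delta u_m=1$.

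Next I would introduce the auxiliary function $W\in C^2(\overline{\Omega})$ defined by
\begin{align*}
-\Delta W=\frac{P(\Omega)}{|\Omega|}\ \text{in }\Omega,\qquad \frac{\partial W}{\partial\nu}=-1\ \text{on }\partial\Omega,\qquad \min_{\partial\Omega} W=0,
\end{align*}
which exists uniquely (the compatibility condition is automatic and uniqueness comes from the pinning $\min_{\partial\Omega}W=0$), and set $B:=\int_{\partial\Omega}W\,d\sigma\geq 0$. Suppose first that $u_m>0$ on all of $\partial\Omega$. Then the Neumann condition in \eqref{EulerLagrangeforuintro} holds everywhere on $\partial\Omega$; matching $-\Delta u_m=1$ with $-\Delta W=P(\Omega)/|\Omega|$ and using uniqueness of the Neumann problem (up to an additive constant) yields
\begin{align*}
u_m=\frac{|\Omega|}{P(\Omega)}\,W+t_m,\qquad t_m=\frac{(m-B)\,|\Omega|}{P(\Omega)^2},
\end{align*}
where the formula for $t_m$ comes from imposing $\partial_\nu u_m=-\tfrac{1}{m}\int_{\partial\Omega}u_m\,d\sigma$. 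Since $\min_{\partial\Omega}W=0$, positivity of $u_m$ on $\partial\Omega$ is equivalent to $t_m>0$, i.e.\ $m>B$. Conversely, strict convexity (modulo constants) of the suitably normalized energy in $u$ shows that for $m>B$ this ansatz is indeed the minimizer, so $u_m>0$ on all of $\partial\Omega$.

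For the other direction I would argue by contradiction: if $m<B$ and the contact set $\{u_m=0\}\cap\partial\Omega$ had $\mathscr{H}^{n-1}$-measure zero, then the Robin/Neumann condition would again hold a.e.\ on $\partial\Omega$, and $H^1$ Neumann uniqueness would force $u_m$ to take the explicit form above; but then $u_m\geq 0$ together with $\min_{\partial\Omega}W=0$ imposes $t_m\geq 0$, hence $m\geq B$, contradicting $m<B$. Therefore the contact set has positive $\mathscr{H}^{n-1}$-measure whenever $m<B$, and we may set $m_1:=B$. It remains to verify $B>0$ precisely when $\Omega$ is not a ball: if $B=0$, then $W\equiv 0$ on $\partial\Omega$ (since $W\geq 0$ there), so $W$ solves
\begin{align*}
-\Delta W=\frac{P(\Omega)}{|\Omega|}\ \text{in }\Omega,\qquad W=0\ \text{and}\ \frac{\partial W}{\partial\nu}=-1\ \text{on }\partial\Omega,
\end{align*}
which is Serrin's classical overdetermined problem, forcing $\Omega$ to be a ball. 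Hence $m_1=B>0$ for any non-ball $C^2$ domain.

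The main obstacle I foresee is the rigidity step promoting the subdifferential boundary condition on the a priori rough contact set to a genuine Neumann condition on almost all of $\partial\Omega$: this is precisely where the global H\"older regularity of $u_m$ and the perturbation argument referenced in the abstract enter, together with careful use of $H^1$-trace theory on Lipschitz boundaries. That same rigidity, combined with $C^2$-regularity of the domain, is what enables the final invocation of Serrin's theorem.
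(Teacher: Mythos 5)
Your proposal is correct and follows essentially the same route as the paper: normalize an auxiliary Neumann torsion-type function (your $W$ versus the paper's $u_0$, differing only by a multiplicative and additive constant so that $B=\delta_\Omega P^2(\Omega)/|\Omega|=m_1$), deduce from the explicit form of the free-Neumann solution that its boundary minimum equals $t_m$, identify the threshold via $t_m>0$, and invoke Serrin's overdetermined theorem to get strict positivity of the threshold for non-balls; your convexity argument for minimality of the Neumann ansatz when it is nonnegative on $\partial\Omega$ plays the role of the paper's Lemma \ref{jianhua}, and the measure-zero contact set argument matches the paper's use of uniqueness in Proposition \ref{1es}.
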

Moreover, through the proof of Theorem \ref{concentration}, the exact dependence of $m_1$ on $\Omega$ is given. Precisely, let $u_0$ be the solution to the following equation \begin{align}
\label{u0}
\begin{cases}
-\Delta u=1 \quad &\mbox{in $\Omega$}\\
\frac{\partial u}{\partial \nu}=-\frac{|\Omega|}{P(\Omega)} &\mbox{on $\partial \Omega$.}
\end{cases} 
\end{align}
Then  $$m_1=m_1(\Omega)=\delta(\Omega)\frac{P^2 (\Omega)}{|\Omega|},$$ where $\delta(\Omega)$ is the mean of $u_0$ over $\partial \Omega$ minus the minimum of $u_0$ on $\partial \Omega$, which is a natural geometric quantity associated to $\Omega$, and is strictly positive as long as $\Omega$ is not a ball, due to Serrin's famous overdetermined result, see \cite{Serrin}. We remark that \eqref{u0} is also exploited in the seminal paper \cite{Cabre} by Cabr\'e to give an alternative proof of classical isoperimetric inequality via ABP estimate, but the equation \eqref{u0} itself still leaves several unsolved problems, such as minimizing $\delta(\Omega)$ over polytopes with fixed number of faces, or stability inequalities regarding $\delta(\Omega)$. In this paper we do not deal with these problems though.

Extending Theorem \ref{concentration} to general Lipschitz domains is very subtle, and the difficulties lie in two aspects. First, the derivation of \eqref{EulerLagrangeforuintro} need to be careful since in \eqref{1problem}, the boundary term $\int_{\partial \Omega}|\cdot|\, d\sigma$ is not $C^1$ and we need to prove at least $C^0$ boundary regularity of minimizers in order for \eqref{EulerLagrangeforuintro} to be valid. Luckily, by regularization argument, we can actually prove global $W^{1,p}$ and H\"older regularity, see Theorem \ref{global} below. Second, Serrin's overdetermined result is proved under the setting that the domain is $C^2$ and the solution is classical. Whether or not the symmetry result is true, under the setting that the domain is merely Lipschitz and the overdtermined system admit a weak solution, remains a challenging open question. We will give some comments in the last section.

\medskip

For the second problem \eqref{2problem} of minimizing temperature decay, Bucur-Buttazzo-Nitsch\cite{BBN} prove the following surprising symmetry breaking result.
\begingroup
\renewcommand{\thetheorem}{\Alph{theorem}}
\begin{theorem}\emph{(\cite[Theorem 3.1]{BBN})}
\label{gongming}
Let $\Omega$ be a ball and $u_m$ be a solution to \eqref{2problem}. Then there exists $m_0>0$ such that when $m>m_0$, $u_m$ is radial, while when $m<m_0$, $u_m$ is not radial.
\end{theorem}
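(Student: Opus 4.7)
My plan is to compare the Rayleigh-type quotient in \eqref{2problem}, which I denote $J_m$, evaluated on the (radial) first Robin eigenfunction of $B$ with Robin parameter $\beta_m := P(B)/m$ against a non-radial perturbation, and to locate the breaking threshold via the spectral gap with the first nontrivial Neumann eigenvalue $\mu_2(B)$. Let $\lambda_1^\beta(B)$ denote the first Robin eigenvalue of $-\Delta$ on $B$ with parameter $\beta$, and let $u_1$ be its positive radial eigenfunction, normalized so $\int_B u_1^2 = 1$. Because $|u_1|$ is constant on $\partial B$, Cauchy--Schwarz on $\partial B$ is an equality, giving
\begin{equation*}
\frac{1}{m}\Bigl(\int_{\partial B}|u_1|\,d\sigma\Bigr)^{2} = \beta_m\int_{\partial B}u_1^2\,d\sigma,
\end{equation*}
so $J_m(u_1)=\lambda_1^{\beta_m}(B)$ and in particular $\inf J_m \le \lambda_1^{\beta_m}(B)$.

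For the breaking direction ($m$ small) I would test with $u_1+\ep\phi$, where $\phi$ is a first nontrivial Neumann eigenfunction of $-\Delta$ on $B$ of eigenvalue $\mu_2(B)$, normalized so that $\int_B\phi^2=1$ and $\int_Bu_1\phi=0$. Since $\phi=f(r)Y(\theta)$ with $Y$ a nonconstant spherical harmonic, one has $\int_{\partial B}\phi\,d\sigma=0$ and $\int_B\nabla u_1\cdot\nabla\phi=0$ (the latter by integration by parts using $\partial_\nu u_1=-\beta_m u_1$ on $\partial B$ and the former identity). For small $\ep$, $u_1+\ep\phi>0$ on $\overline{B}$, so $|u_1+\ep\phi|=u_1+\ep\phi$, and a direct expansion of $J_m$ collapses to
\begin{equation*}
J_m(u_1+\ep\phi)=\frac{\lambda_1^{\beta_m}(B)+\ep^{2}\mu_2(B)}{1+\ep^{2}}=\lambda_1^{\beta_m}(B)+\ep^{2}\bigl(\mu_2(B)-\lambda_1^{\beta_m}(B)\bigr)+O(\ep^{4}).
\end{equation*}
Since $\beta\mapsto\lambda_1^\beta(B)$ is continuous and strictly increasing from $0$ (as $\beta\to 0^+$) up to $\lambda_1^D(B)$ (as $\beta\to\infty$) and $0<\mu_2(B)<\lambda_1^D(B)$ on the ball, there is a unique $m_0>0$ determined by $\lambda_1^{P(B)/m_0}(B)=\mu_2(B)$. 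For $m<m_0$ we have $\mu_2(B)<\lambda_1^{\beta_m}(B)$, so the expansion forces $J_m(u_1+\ep\phi)<J_m(u_1)$ for small $\ep>0$, and the minimizer cannot be radial.

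For $m>m_0$ I would conversely show that every minimizer is radial. The Euler--Lagrange analysis yields $-\Delta u_m=\lambda(m)u_m$ in $B$ with $\partial_\nu u_m=-c_m$ on $\{u_m>0\}\cap\partial B$ for a constant $c_m$, and $u_m=0$ on the contact set. A contact set of positive surface measure is ruled out by comparing $J_m(u_m)$ with a mixed first eigenvalue (Dirichlet on the contact set, Robin with parameter $\beta_m$ on the complement), which strictly exceeds $\lambda_1^{\beta_m}(B)$ and thus contradicts $J_m(u_m)\le\lambda_1^{\beta_m}(B)$. Once $u_m>0$ everywhere on $\partial B$, spherical averaging gives $u_m=\overline{u_m}+v$, where $v$ solves $-\Delta v=\lambda(m)v$ in $B$ with $\partial_\nu v=0$ on $\partial B$ and has vanishing angular mean on every sphere; since $\lambda(m)\le\lambda_1^{\beta_m}(B)<\mu_2(B)$ for $m>m_0$, we must have $v\equiv 0$ and $u_m$ is radial. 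I expect the main obstacle to be precisely this exclusion of a positive-measure boundary contact set for $m>m_0$, which combines a sharp mixed Dirichlet--Robin eigenvalue comparison with the global H\"older regularity of $u_m$ up to $\partial B$ (supplied by Theorem \ref{global}); the spherical-averaging and second-variation steps are by contrast clean and explicit.
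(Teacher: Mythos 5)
Theorem~\ref{gongming} is quoted from \cite{BBN} and is not reproved in the present paper, so there is no internal proof to match; the closest relative here is the paper's own proof of the generalization Theorem~\ref{q2main}, which establishes only the concentration-breaking direction. Your argument for the direction $m<m_0$ is essentially that proof specialized to the ball: perturbing the radial candidate by the first nontrivial Neumann eigenfunction (which for the ball is precisely the $\kappa_1$-minimizer of the paper, by Remark~\ref{zhu}) and exploiting orthogonality. Two small remarks. First, your computation shows $J_m(u_1+\ep\phi)<J_m(u_1)$, hence $u_1$ is not a minimizer; to conclude that \emph{no} radial function is a minimizer you should add the observation that, for every radial $w\in H^1(B)$, Cauchy--Schwarz on $\partial B$ is an equality, so $J_m(w)=\frac{\int_B|\nabla w|^2+\beta_m\int_{\partial B}w^2}{\int_B w^2}\ge\lambda_1^{\beta_m}(B)$; since you showed $\inf J_m<\lambda_1^{\beta_m}(B)$, no radial $w$ can attain the infimum. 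Second, your characterization $\lambda_1^{P(B)/m_0}(B)=\mu_2(B)$ only coincides with the paper's $\lambda_{m_0}(B)=\mu_2(B)$ once the radiality for $m\ge m_0$ is known; until then you have only $\lambda_m(B)\le\lambda_1^{\beta_m}(B)$, so the two definitions of $m_0$ might a priori differ. This is not fatal, but it should be acknowledged.

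The genuine gap is in the direction $m>m_0$, specifically in the step ruling out a positive-measure contact set. You invoke a comparison with the first eigenvalue of a mixed Dirichlet--Robin problem ``which strictly exceeds $\lambda_1^{\beta_m}(B)$,'' but the inequality needed to set up this comparison goes the wrong way. If $u_m$ vanishes on $\Gamma\subset\partial B$ and $\int_B u_m^2=1$, then
\begin{equation*}
J_m(u_m)=\int_B|\nabla u_m|^2\,dx+\frac{1}{m}\Bigl(\int_{\partial B\setminus\Gamma}u_m\,d\sigma\Bigr)^2
\le \int_B|\nabla u_m|^2\,dx+\beta_m\int_{\partial B\setminus\Gamma}u_m^2\,d\sigma,
\end{equation*}
where the inequality is Cauchy--Schwarz using $\abs{\partial B\setminus\Gamma}\le P(B)$. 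That is, $J_m(u_m)$ is bounded \emph{above}, not below, by the mixed Dirichlet--Robin Rayleigh quotient, so the fact that the mixed eigenvalue exceeds $\lambda_1^{\beta_m}(B)$ yields no contradiction with $J_m(u_m)\le\lambda_1^{\beta_m}(B)$. Moreover, the boundary condition the Euler--Lagrange equation actually imposes on $\{u_m>0\}\cap\partial B$ is the constant-flux condition $\partial_\nu u_m=-c_m$, not a Robin condition $\partial_\nu u_m=-\beta_m u_m$, so the object under consideration is not a mixed Dirichlet--Robin eigenfunction in the first place. The remaining spherical-averaging step is clean once $u_m>0$ everywhere on $\partial B$ is known, but the exclusion of the contact set needs a different mechanism (the one used in \cite{BBN}) than the one you propose; as you correctly flag, this is where the real content of the $m>m_0$ direction lies, and it is not supplied by the sketch.
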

\endgroup
The number $m_0$ is determined as the unique value such that the infimum in \eqref{2problem} is equal to the second eigenvalue of Neuman Laplacian. Note that Theorem \ref{gongming} can be also viewed as a concentration breaking result, since essentially the authors show that when $m<m_0$, $u_m$ must vanish on a subset of $\partial \Omega$ with positive $\mathscr{H}^{n-1}$ measure. Physically this means that even for a round thermal body, when we do not have enough material, then in order for lowest temperature decay, the best insulation should not concentrate on the whole boundary. 

%There are two questions remained. First, for ball shape, what is the exact value of $m_0$? Second, 

Now there are two immediate questions.
\begin{question}
\label{ques2}
Let $m_0$ be as in Theorem \ref{gongming}. What is the precise value of $m_0$?
\end{question}
\begin{question}
\label{ques3}
Does concentration breaking also occur on any shape of bodies in the second problem?
\end{question}

On Question \ref{ques2}, we derive an exact formula for $m_0$.
\begin{theorem}
\label{exact}
Let $\Omega$ be a ball of radius $R$ in $\mathbb{R}^n$ and $m_0$ be the breaking number as in Theorem \ref{gongming}. Then \begin{align}
\label{1/2}
    m_0=\frac{n-1}{n}\frac{1}{\mu_2(\Omega)}\frac{P^2 (\Omega)}{|\Omega|}(\approx \frac{2\pi}{3.39}R^2\, \mbox{when $n=2$}),
\end{align}
where $\mu_2$ is the second eigenvalue of Neumann Laplacian on $\Omega$. 
\end{theorem}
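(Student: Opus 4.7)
The plan is to exploit the characterization given above that at $m=m_0$ the infimum in \eqref{2problem} equals $\mu_2(\Omega)$, combined with the special structure of radial eigenfunctions on the ball. By Theorem \ref{gongming} and continuity/monotonicity of the minimum value in $m$, a radial minimizer $u_{m_0}(x)=U(|x|)$ exists at the threshold $m=m_0$, and may be taken strictly positive (replacing $u$ by $|u|$ preserves the Rayleigh quotient, and positivity of the first radial Robin eigenfunction on a ball is classical). Writing out the Euler--Lagrange equation and using $\int_{\partial\Omega}U\,d\sigma = U(R)\,P(\Omega)$ (since $U$ is radial), one obtains
\[U''(r) + \frac{n-1}{r}\,U'(r) + \mu_2(\Omega)\,U(r) = 0 \quad \text{on } (0,R), \qquad U'(R) + \frac{P(\Omega)}{m_0}\,U(R) = 0.\]

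The key step is to couple $U$ with a second Neumann eigenfunction, which on the ball takes the separated form $\phi_2 = F(r)\,Y(\theta)$ with $Y$ a degree-one spherical harmonic. The radial factor $F$ solves the $\ell=1$ radial Helmholtz equation
\[F''(r) + \frac{n-1}{r}\,F'(r) - \frac{n-1}{r^2}\,F(r) + \mu_2(\Omega)\,F(r) = 0, \qquad F'(R)=0.\]
Differentiating the equation for $U$ once shows that $V := U'$ satisfies exactly this $\ell=1$ equation. Since $U$ is smooth at the origin we have $V(0) = U'(0) = 0$, so $V$ lies in the one-dimensional space of $\ell=1$ radial solutions regular at the origin; hence $V = cF$ for some constant $c \ne 0$ (and $c \ne 0$ because $U$ is a non-constant eigenfunction). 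Consequently $U''(R) = c\,F'(R) = 0$.

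Substituting $U''(R)=0$ into the radial equation for $U$ at $r=R$ gives $U'(R)/U(R) = -\mu_2(\Omega)\,R/(n-1)$, and comparison with the Robin condition yields $P(\Omega)/m_0 = \mu_2(\Omega)\,R/(n-1)$. The ball identity $R = n|\Omega|/P(\Omega)$ then produces $m_0 = \frac{n-1}{n}\frac{P(\Omega)^2}{\mu_2(\Omega)\,|\Omega|}$, and in dimension two the approximation $2\pi R^2/3.39$ follows from $\mu_2 R^2 = (j'_{1,1})^2 \approx 3.39$. The principal obstacle is the rigorous derivation of the radial Euler--Lagrange equation at the threshold $m=m_0$, which requires both the existence of a radial minimizer and careful handling of the non-$C^1$ boundary term in the functional; both are resolved by restricting the minimization to the (closed) radial subspace where $U > 0$ makes $|U| = U$ classical, and by invoking the global regularity announced in Theorem \ref{global}.
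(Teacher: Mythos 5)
Your proof is correct and reaches the same answer, but your derivation of the key fact $U''(R)=0$ at $m=m_0$ is genuinely different from the paper's. The paper writes $u_m = r^{1-n/2}J_{n/2-1}(\sqrt{\lambda_m}\,r)$ and uses Bessel recurrence identities to compute $\partial_r^2 u_m$ explicitly, then invokes the fact that $\sqrt{\mu_2(\Omega)}\,R$ is the first positive zero of the derivative of $t\mapsto t^{1-n/2}J_{n/2}(t)$. You instead differentiate the radial ODE for $U$ to observe that $V=U'$ solves the $\ell=1$ radial Helmholtz equation, is regular at the origin, and is therefore a nonzero multiple of the radial factor $F$ of the second Neumann eigenfunction; since $F'(R)=0$ is the Neumann condition, $U''(R)=cF'(R)=0$. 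This is cleaner and more conceptual, and it makes the role of $\mu_2$ transparent, at the modest cost of checking (via the indicial equation at $r=0$) that the regular $\ell=1$ solution space is one-dimensional, so that $V$ and $F$ must be proportional. Both arguments then close identically by evaluating the radial equation at $r=R$ and comparing with the Robin ratio $U'(R)/U(R)=-P(\Omega)/m_0$; this is precisely what the paper packages in Lemma~\ref{2bel} and its $n$-dimensional version \eqref{invH} through the boundary decomposition $\Delta = \partial_{\nu\nu}+H\,\partial_\nu+\Delta_{\partial\Omega}$, which for a radial function on the sphere $r=R$ reduces to $U''(R)+\frac{n-1}{R}U'(R)$. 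The delicate points you flag at the end---existence of a radial minimizer at the threshold $m=m_0$ and rigorously deriving the Robin condition despite the non-$C^1$ boundary term---are real; the paper sidesteps the first by computing with $m>m_0$ (where Theorem~\ref{gongming} guarantees radiality) and taking $m\to m_0^+$ limits, and your proposed fix of restricting to the radial subspace where positivity makes $|U|=U$ is sound.
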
 
Hence from \eqref{1/2}, we know that $m_0$ is proportional to the volume of $\Omega$, and hence it cannot be small if the volume of $\Omega$ is not.

We remark that \eqref{1/2} has been proved in our recent paper \cite{HLL1}, but it was obtained as a byproduct from the domain variational point of view. Even though the proof there is interesting and implies other consequences such as stability of ball shape with prescribed volume, it is extremely complicated even in the case of $n=2$, since the argument involves second shape derivatives and requires choosing appropriate path of variation. Readers may be lost in the long computation and cannot see the intuition.  In this present paper, we will give a rather simple and direct proof of \eqref{1/2}, and especially in two dimensions, $2\pi$ can be easily visualized.

\medskip

On Question \ref{ques3}, the intuition is that ball should be the least possible shape on which the concentration breaking occurs, and hence the answer to Question \ref{ques3} should be positive. This is justified in the following theorem, and for simplicity we state for $C^1$ or convex domains.

\begin{theorem}
\label{q2main}
Let $\Omega \subset \mathbb{R}^n$ be a bounded $C^1$ or convex domain. Then there exists $m_0>0$ such that when $m<m_0$, $u_m$ must vanish on a subset of $\partial \Omega$ with positive $\mathscr{H}^{n-1}$ measure.
\end{theorem}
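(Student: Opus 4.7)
My plan is to argue by contradiction. Suppose no such $m_0>0$ exists; then one can extract a sequence $m_k \to 0^+$ and corresponding minimizers $u_{m_k}$ of \eqref{2problem} with $u_{m_k}\ne 0$ almost everywhere on $\partial\Omega$. Since the functional in \eqref{2problem} is invariant under $u\mapsto |u|$, I may replace $u_{m_k}$ by its absolute value and assume $u_{m_k}\ge 0$; the strong maximum principle applied to the interior equation then gives $u_{m_k}>0$ in $\Omega$, and by assumption $u_{m_k}>0$ almost everywhere on $\partial\Omega$. Using the global H\"older regularity of minimizers established earlier in the paper, this positivity permits the absolute value to be dropped in the boundary term and yields the Euler--Lagrange system
\[
-\Delta u_{m_k}=\lambda_{m_k} u_{m_k}\ \text{in }\Omega,\qquad
\frac{\partial u_{m_k}}{\partial \nu}=-\frac{1}{m_k}\int_{\partial\Omega} u_{m_k}\,d\sigma\ \text{on }\partial\Omega,
\]
where $\lambda_m$ denotes the infimum in \eqref{2problem}. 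Thus $u_{m_k}$ is a Laplacian eigenfunction with a \emph{constant} Neumann datum on $\partial\Omega$.

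Using the first Dirichlet eigenfunction $\phi_1^{\mathrm D}\in H^1_0(\Omega)\subset H^1(\Omega)$ as a competitor, whose boundary term vanishes, gives $\lambda_m\le\lambda_1^{\mathrm D}(\Omega)$ for every $m>0$. Hence, after normalizing $v_m=u_m/\|u_m\|_{L^2(\Omega)}$, the family $\{v_{m_k}\}$ is bounded in $H^1(\Omega)$ and a subsequence satisfies $v_{m_k}\rightharpoonup v$ weakly in $H^1$ and strongly in $L^2$, with $v\ge 0$ and $\|v\|_{L^2}=1$. Integrating the interior equation over $\Omega$ and applying the divergence theorem yields the identity
\[
\int_{\partial\Omega} u_{m_k}\,d\sigma=\frac{m_k\lambda_{m_k}}{P(\Omega)}\int_{\Omega} u_{m_k}\,dx,
\]
so that $\int_{\partial\Omega} v_{m_k}\,d\sigma\to 0$; combined with $v_{m_k}\ge 0$ on $\partial\Omega$, this forces $v\in H^1_0(\Omega)$. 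Lower semicontinuity of the Dirichlet energy then identifies $\lambda_{m_k}\to \lambda_1^{\mathrm D}(\Omega)$ and $v$ as the positive first Dirichlet eigenfunction (up to normalization).

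Finally, I would pass to the limit in the weak Euler--Lagrange formulation against an arbitrary test function $\phi\in H^1(\Omega)$. The normalized constant flux $\tilde c_{m_k}=\lambda_{m_k}\int_\Omega v_{m_k}\,dx/P(\Omega)$ converges to the strictly positive constant $\tilde c_0=\lambda_1^{\mathrm D}\int_\Omega v\,dx/P(\Omega)$, and the limiting weak identity, combined with $-\Delta v=\lambda_1^{\mathrm D} v$ and $v=0$ on $\partial\Omega$, shows that $v$ solves the overdetermined problem
\[
-\Delta v=\lambda_1^{\mathrm D} v,\ v>0\ \text{in }\Omega;\qquad v=0,\ \frac{\partial v}{\partial\nu}=-\tilde c_0\ \text{on }\partial\Omega.
\]
By the Serrin-type symmetry theorem for semilinear overdetermined problems---classical for $C^2$ domains and extended to the $C^1$ and convex settings via moving-plane refinements and $P$-function/Weinberger-type arguments, respectively---$\Omega$ must then be a ball. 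If $\Omega$ is a ball, the conclusion follows directly from Theorem~\ref{gongming}; otherwise, we have the desired contradiction.

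The main technical hurdle is invoking the appropriate form of Serrin's overdetermined symmetry result in the low-regularity ($C^1$ or convex) categories, where the classical Gidas--Ni--Nirenberg moving-plane machinery must be adapted; a secondary point is passing the constant Neumann condition rigorously to the limit, which requires the weak $H^1$ convergence of $v_{m_k}$ to be paired with the trace convergence supplied by the integral identity above.
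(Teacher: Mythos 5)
Your argument takes a genuinely different route than the paper: instead of a one-shot comparison, you send $m_k\to0^+$, extract a normalized $H^1$--weak limit $v$, use the flux identity $\int_{\partial\Omega}v_{m_k}\,d\sigma=\frac{m_k\lambda_{m_k}}{P(\Omega)}\int_\Omega v_{m_k}\,dx$ to place $v$ in $H^1_0(\Omega)$, identify $v$ with the first Dirichlet eigenfunction, and pass the constant Neumann datum to the limit to obtain an overdetermined system. The compactness and trace steps are sound (on Lipschitz $\Omega$ the trace embedding $H^1(\Omega)\hookrightarrow L^2(\partial\Omega)$ is compact, so $\int_{\partial\Omega}v_{m_k}\,d\sigma\to 0$ and $v\ge 0$ together give $v|_{\partial\Omega}=0$), and the limiting weak identity does yield $\partial_\nu v=-\tilde c_0<0$ in the appropriate weak sense.

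The gap is the final step, which you yourself flag as the main hurdle: you need a Serrin-type rigidity theorem for the overdetermined eigenvalue problem $-\Delta v=\lambda_1^{\mathrm D}v$, $v=0$, $\partial_\nu v=\text{const}$, on a domain that is merely $C^1$ or convex. That statement is not available. Serrin's moving-plane proof and Weinberger's $P$-function proof both require $C^2$ boundary regularity and a classical (or at least $H^2$) solution, and the paper itself records in Section 7 (Question \ref{fundaserrin}) that even the torsion version of the overdetermined rigidity is an open problem on nonsmooth domains, citing \cite{Pra98} only for the special case of a $C^2$ boundary with an isolated corner plus an interior/exterior ball condition. For a general $C^1$ domain one does not even have $H^2$ regularity up to the boundary, so the $P$-function argument fails outright; for convex domains $H^2$ holds by Grisvard's theorem, but no reference establishes the overdetermined rigidity there either, and your appeal to "moving-plane refinements and $P$-function/Weinberger-type arguments" is precisely the missing content. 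Because of this, your proof as written would only be complete for $C^2$ domains, not for the $C^1$/convex class in the statement.

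By contrast, the paper sidesteps Serrin entirely. It introduces $\kappa_1(\Omega)=\inf\{\int_\Omega|\nabla u|^2/\int_\Omega u^2:\int_{\partial\Omega}u\,d\sigma=0\}$, shows $\kappa_1(\Omega)\le\mu_2(\Omega)<\lambda_D(\Omega)$ (the strict inequality from Friedlander, or Faber--Krahn plus Szeg\H{o}--Weinberger), defines $m_0$ by $\lambda_{m_0}(\Omega)=\kappa_1(\Omega)$, and then, assuming $u_m>0$ on $\partial\Omega$ for some $m<m_0$, perturbs $u_m$ by $\epsilon w$ (the $\kappa_1$-minimizer) to strictly decrease the Rayleigh quotient in \eqref{jihao}, a contradiction. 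This only requires H\"older continuity of $u_m$ and $w$ up to the boundary (which Theorem \ref{global} provides in the $C^1$/convex setting) and elementary algebra, and it has the added benefit of producing an explicit $m_0$ rather than a pure existence statement. You may want to keep your compactness machinery in mind for other purposes, but for the present theorem the eigenvalue-comparison route is both simpler and, crucially, closes the gap your approach leaves open.
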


We also note that the proof of both Theorem \ref{concentration} and Theorem \ref{q2main} also requires us to prove global regularity of minimizers. Concerning this direction, so far the best regularity result we have is the following.
\begin{theorem}
\label{global}
Let $\Omega$ be a bounded Lipschitz domain in $\mathbb{R}^n$. If $u_m$ is a minimizer to \eqref{1problem} with $\int_{\Omega}u_m\, dx=1$, or a minimzier to \eqref{2problem} with $\int_{\Omega}u_m^2\, dx=1$, then there exists $\epsilon=\epsilon(\Omega)>0$ such that
\begin{align}
    \label{estigrauintro}
\Vert  u_m \Vert_{W^{1,p}(\Omega)} \le C(m,p,\Omega),
\end{align}for any $p \in (\frac{3}{2}-\epsilon,3+\epsilon)$. Moreover, minimizers to \eqref{1problem} are H\"older continuous up to the boundary for any dimension $n$. If $\Omega$ is $C^1$ or convex, then the range of $p$ in \eqref{estigrauintro} can be $(1,\infty)$.
\end{theorem}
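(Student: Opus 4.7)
The plan is to reduce Theorem \ref{global} to $L^p$-solvability for the Poisson equation with bounded Neumann data on a Lipschitz domain, by first replacing the non-differentiable boundary term $|u|$ by a smooth convex approximation. Specifically, set $j_\epsilon(t)=\sqrt{t^2+\epsilon^2}$ and let $u_\epsilon$ be a minimizer of the regularized functional in which $\int_{\partial\Omega}|u|\,d\sigma$ is replaced by $\int_{\partial\Omega}j_\epsilon(u)\,d\sigma$, under the normalization $\int_\Omega u=1$ for \eqref{1problem} or $\int_\Omega u^2=1$ for \eqref{2problem}. Uniform coercivity gives a uniform $H^1$-bound on $\{u_\epsilon\}$, and a $\Gamma$-convergence argument forces a subsequence to converge strongly to a minimizer $u_m$ of the original functional. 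Since $j_\epsilon\in C^2$, a standard first-variation computation yields the Euler--Lagrange system
\begin{equation*}
-\Delta u_\epsilon=f_\epsilon\ \text{in}\ \Omega,\qquad \frac{\partial u_\epsilon}{\partial\nu}=g_\epsilon\ \text{on}\ \partial\Omega,
\end{equation*}
where $f_\epsilon$ is a constant for \eqref{1problem} and $f_\epsilon=\lambda_\epsilon u_\epsilon$ for \eqref{2problem} with $\lambda_\epsilon$ uniformly bounded, and $g_\epsilon=-\frac{1}{m}\bigl(\int_{\partial\Omega}j_\epsilon(u_\epsilon)\,d\sigma\bigr)j_\epsilon'(u_\epsilon)$. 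Because $|j_\epsilon'|\le 1$ and the boundary integral is controlled via the trace inequality from the $H^1$-bound, one obtains $\|g_\epsilon\|_{L^\infty(\partial\Omega)}\le C(m,\Omega)$ uniformly in $\epsilon$.

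Next I would invoke the Jerison--Kenig-type sharp solvability theorem for the Neumann Laplacian on a bounded Lipschitz domain: there exists $\epsilon_0=\epsilon_0(\Omega)>0$ such that, for every $p\in(3/2-\epsilon_0,3+\epsilon_0)$, every pair $(f,g)\in L^p(\Omega)\times L^p(\partial\Omega)$ satisfying the compatibility condition produces a unique mean-zero solution with $\|v\|_{W^{1,p}(\Omega)}\le C(p,\Omega)\bigl(\|f\|_{L^p(\Omega)}+\|g\|_{L^p(\partial\Omega)}\bigr)$. Applied to $u_\epsilon$ this delivers a uniform bound $\|u_\epsilon\|_{W^{1,p}(\Omega)}\le C(m,p,\Omega)$. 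For \eqref{2problem} the right-hand side $\lambda_\epsilon u_\epsilon$ requires an a priori $L^p$-bound; this can be produced by Moser iteration on $u_\epsilon$ with bounded oblique data, first to obtain $L^\infty$-bounds and then to bootstrap into the Neumann estimate. Weak lower semicontinuity then transfers the bound to $u_m$. When $\partial\Omega$ is $C^1$ or $\Omega$ is convex, the classical Calder\'on--Zygmund theory (Agmon--Douglis--Nirenberg) and Grisvard's Neumann estimates on convex domains upgrade the admissible range to every $p\in(1,\infty)$.

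Finally, for \eqref{1problem}, the limit equation is $-\Delta u_m=\mu$ for a constant $\mu$ with $L^\infty$ Neumann data, so either Lieberman's boundary $C^{0,\alpha}$ estimate for oblique-derivative problems on Lipschitz domains applies directly, or one may subtract the explicit particular solution $-\mu|x|^2/(2n)$ and invoke the boundary De Giorgi--Nash--Moser theorem for the remaining harmonic function with bounded Neumann data; either route gives $u_m\in C^{0,\alpha}(\overline\Omega)$ in every dimension. The principal obstacle is the Jerison--Kenig step: pinning down the sharp range for the Neumann Laplacian on an \emph{arbitrary} bounded Lipschitz domain and keeping the estimate genuinely uniform in $\epsilon$ requires careful control of how the regularization feeds back into the data, and one must verify that the exponent $\epsilon_0(\Omega)$ does not degenerate along the regularization. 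A secondary technical point is the Moser iteration in \eqref{2problem}, which has to be carried out for an oblique Neumann problem whose boundary is only Lipschitz, where standard interior iteration schemes must be adapted to accommodate the boundary data.
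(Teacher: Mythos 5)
Your treatment of problem \eqref{1problem} tracks the paper's argument closely: regularize the boundary term with $\sqrt{u^2+\epsilon^2}$, derive the regularized Euler--Lagrange system, observe that the Neumann data are bounded in $L^\infty(\partial\Omega)$ uniformly in the regularization parameter, invoke the $L^p$ Neumann solvability theory on Lipschitz domains (the paper cites Fabes--Mendez--Mitrea \cite{FMM98} for Lipschitz and $C^1$, Geng--Shen \cite{GS10} for convex; this is the Neumann analogue of the Jerison--Kenig Dirichlet result you name), and pass to the limit. Because minimizers of \eqref{1problem} are unique up to a constant factor (Proposition~\ref{1es}), the limit of the $u_\epsilon$ \emph{is} the normalized minimizer $u_m$, so the uniform bound transfers. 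Your alternative route to H\"older continuity (subtract a particular solution and apply boundary H\"older estimates for harmonic functions with bounded Neumann data on Lipschitz domains) is essentially the Kenig--Pipher argument \cite{KP93} the paper uses in Remark~\ref{Kenigpoint}; the Lieberman oblique-derivative route, by contrast, does not reach merely Lipschitz boundaries, so the Kenig--Pipher/De~Giorgi--Nash--Moser route is the one that actually closes.

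There is, however, a genuine gap in your treatment of problem \eqref{2problem}. You apply the same regularize-and-pass-to-the-limit scheme, but minimizers of \eqref{2problem} are \emph{not} unique, even after normalization (the paper says this explicitly before Proposition~\ref{2es}). Your $\Gamma$-convergence argument therefore produces a uniform $W^{1,p}$ bound for \emph{some} minimizer obtained as a limit of the $u_\epsilon$, not for an \emph{arbitrary} prescribed minimizer $u_m$ with $\int_\Omega u_m^2=1$, which is what the theorem asserts. Without uniqueness there is no way to force the regularized minimizers to converge to the given $u_m$. The paper avoids this entirely by working directly with $u_m$ via one-sided difference quotients: comparing $\lambda_m(u_m+t\phi)$ with $\lambda_m(u_m)$ as $t\to 0^\pm$ yields a two-sided variational inequality, which produces a function $g\in L^\infty(\partial\Omega)$ with $\|g\|_{L^\infty}\le\frac{1}{m}\int_{\partial\Omega}|u_m|\,d\sigma$ such that $u_m$ solves $-\Delta u_m=\lambda_m u_m$ with $\partial_\nu u_m=g$. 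The $L^p$ Neumann theory and a Sobolev-embedding bootstrap ($p_0=2$, $p_{k+1}=\min\{\bar p,p_k^*\}$) then give \eqref{estigrau2} for the given $u_m$; no Moser iteration is needed, which also sidesteps the technical difficulty you correctly flag about running Moser iteration with oblique data on a Lipschitz boundary. You should replace the regularization step in \eqref{2problem} with this variational-inequality argument (or otherwise prove the a priori bound for an arbitrary minimizer, not merely a constructed one).
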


As a consequence, Theorem \ref{concentration} and Theorem \ref{q2main} can be suitably extended to bounded Lipschitz domains, see more details in later sections.
\medskip

Last, we give some comments of our proof of the main results in this paper. The proof of Theorem \ref{concentration} is a consequence of uniqueness (up to constant factor) of solution to \eqref{1problem} and Serrin's overdetermined symmetry results \cite{Serrin}. The proof of Theorem \ref{exact} uses some elementary properties of Bessel functions and decomposition of Laplacian operator. The proof of Theorem \ref{q2main} is based on perturbation argument and comparison results of Laplacian eigenvalues with Dirichlet, Neumann and mean zero boundary conditions. Finally, the proof of \eqref{estigrauintro} in Theorem \ref{global} on global $W^{1,p}$ regularity of minimizers is essentially due to \cite{FMM98} and \cite{GS10}, and the global H\"older regularity for the first problem is due to \cite{KP93}, but the successful application of the references requires a regularization argument in the first problem and variational inequalities in the second problem, both of which are common techniques to deal with non-$C^1$ functionals in variational problems.

\medskip
\subsection*{\textbf{Further remarks on the breaking threshold in the second problem}}
\medskip

We first remark that the concentration breaking threshold given in \eqref{1/2} is also the stability breaking threshold for domains with prescribed volume in $\mathbb{R}^2$. More precisely, if $m_0$ is given by \eqref{1/2}, then it is shown in \cite{BBN} that when $m<m_0$, the disk cannot be a stationary shape to \eqref{2problem}, while when $m>m_0$, it is shown in \cite{HLL1} that the disk must be a local minimizer. 

Also, by Theorem \ref{q2main}, we can define the following positive number
\begin{align*}
    m_0(\Omega):=\sup\{a>0: \mbox{when $m<a$, solution to \eqref{2problem} must vanish on some portion of $\partial \Omega$} \}.
\end{align*}
Numerical results suggest for a wide class of symmetric domains, if we prescribe the volume of $\Omega$, then the lower bound for $m_0(\Omega)$ is exactly given by the right hand side of \eqref{1/2}, and in addition to balls in any dimension, the lower bound can also be attained at regular polygons in $\mathbb{R}^2$ with $k$ sides, $k \ge 4$. These results can motivate new isoperimetric type inequalities, and thus make the study of concentration breaking on \eqref{2problem} even more interesting beyond the physical application in thermal insulation. For more results on this aspect, we refer to  our forthcoming paper \cite{HLL2}.

We also note that Theorem \ref{q2main} only gives qualitative result, and it would be interesting to give a quantitative result similar to what we have done in Theorem \ref{concentration} and Theorem \ref{exact}. More precisely, let $\Omega^*$ be the ball with the same volume as that of $\Omega$. By Theorem \ref{gongming} and Theorem \ref{exact}, when $m<\frac{1}{2\mu_2(\Omega^*)}\frac{P^2 (\Omega^*)}{|\Omega^*|}$, the optimal insulation cannot concentrate on the whole boundary of the ball $\Omega^*$. Intuitively, there is no reason that when $m<\frac{1}{2\mu_2(\Omega^*)}\frac{P^2 (\Omega^*)}{|\Omega^*|}$, the optimal insulation would concentrate everywhere on $\partial \Omega$. Unfortunately, this still leaves open. 

\medskip

\textbf{Outline of the paper:}
In section 2, we study global regularity of minimizers to both optimization problems \eqref{1problem} and \eqref{2problem}. In section 3, we prove Theorem \ref{concentration} and its extension to Lipschitz domains stated in Theorem \ref{jiaqiang}. In section 4, we prove Theorem \ref{q2main} and discuss the case when $m$ is suffciently large stated in Proposition \ref{mlarge}. In section 5, we prove Theorem \ref{exact}. In section 6, we give some comments on the asymptotic behavior of solutions to \eqref{1problem} and \eqref{2problem} when $m$ is approaching to $0$. In section 7, we propose another open question motivated in the study of thermal insulation problems.

\section{Global Regularity of minimizers on Lipschitz domains}
In both minimization problems \eqref{1problem} and \eqref{2problem}, minimizers could vanish on a subset of $\partial \Omega$, and thus it is not convenient to do the first variation due to that the boundary term $\int_{\partial \Omega}|\cdot |\, d\sigma$ involved in the functionals is not $C^1$-regular. In order not only to derive the appropriate form of Euler-Lagrange equation but also to study the boundary concentration of minimizers via perturbation argument, technically we should at least require that any minimizer $u_m$ is continuous up to the boundary, so that we can decompose $\partial \Omega$ into two parts: $\{|u_m|\ne 0\}\cap \partial \Omega$ and $\{|u_m|=0\} \cap \partial \Omega$, and the former one is open.

Global regularity of minimizers certainly depends on the regularity of $\partial \Omega$. In \cite{BBN1}-\cite{Buttazzo} and \cite{DLW} where the first problem \eqref{1problem} is studied, $\Omega$ is assumed to be sufficiently regular, but that how much regularity can ensure desired regularity of minimizers is not discussed. In \cite{BBN} and \cite{HLL1} where the second problem \eqref{2problem} is studied, the regularity of $\Omega$ is also not addressed since there the main focus is on boundary distribution of minimizers on radial domains and smooth variations at ball shape. 

In this section, we will prove certain regularity results for minimizers in bounded Lipschitz domains, since the Lipschitz condition on the domain is a natural assumption for the existence of minimizers, due to Poincar\'e inequalities. 

Let us first recall a well known solvability result for the following Possion equation with Neumann boundary condition, \begin{align}
    \label{Neumannproblem}
\begin{cases}
-\Delta u=f \quad &\mbox{in $\Omega$}\\
\frac{\partial u}{\partial \nu}=g \quad &\mbox{on $\partial \Omega$}.
\end{cases}
\end{align}
Here $\Omega$ is a bounded Lipschitz domain, $f \in  L^p_{-1,0}(\Omega)$, the dual of $W^{1,q}(\Omega)$, and $g \in B^p_{-\frac{1}{p}}(\partial \Omega)$, the dual of the Besov space $B^q_{\frac{1}{p}}(\partial \Omega)$, where $q=\frac{p}{p-1}$. By $u$ being a solution to \eqref{Neumannproblem}, we mean that for any $\phi\in W^{1,q}(\Omega)$,
\begin{align}
    \label{solvesense}
\int_{\Omega}\nabla u \nabla \phi \, dx-<g, tr(\phi)>=<f,\phi>,
\end{align}where $tr(\phi)$ denotes the trace of $\phi$ on $\partial \Omega$, belonging to $W^{1-\frac{1}{q},q}(\partial \Omega)=W^{\frac{1}{p},q}(\partial \Omega)$, which is also denoted in some literature as $B^q_{\frac{1}{p}}(\partial \Omega)$.

Then the following theorem is due to \cite{FMM98} for Lipschitz and $C^1$ domains and \cite{GS10} for convex domains.
\begingroup
\renewcommand{\thetheorem}{\Alph{theorem}}
\begin{theorem}
\label{cite1}
Let $\Omega$ be a bounded Lipschitz domain in $\mathbb{R}^n$. There exists a
positive number $\epsilon=\epsilon(\Omega)>0$ with the following significance. If $p \in (\frac{3}{2}-\epsilon, 3+\epsilon)$, then for any $f \in L^p_{-1,0}(\Omega)$ and $g \in B^p_{-\frac{1}{p}}(\partial \Omega)$ with $\int_{\Omega}f \, dx=-\int_{\Omega}g\, dx$, the Neumann problem \eqref{Neumannproblem} admits a unique (up to constants) solution $u$. Moreover, $\nabla u$ satisfies the following estimate
\begin{align}
    \label{estigrau}
\Vert \nabla u \Vert_{L^p(\Omega)} \le C(p,\Omega)\left(\Vert f\Vert_{ L^p_{-1,0}(\Omega)}+\Vert g \Vert_{B^p_{-\frac{1}{p}}(\partial \Omega})\right).
\end{align}
If $\Omega$ is $C^1$ or convex, then the above conclusion is also true for any $p \in (1,\infty)$.
\end{theorem}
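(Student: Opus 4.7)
My plan is to reduce the problem to a boundary integral equation on $\partial\Omega$ via layer potential theory and then to establish invertibility in the appropriate Besov scale. First I would eliminate the interior source $f$ by taking $v$ to be the Newtonian potential of an extension of $f$ to a fixed ball containing $\Omega$; this yields $-\Delta v = f$ in $\Omega$ with $\|v\|_{W^{1,p}(\Omega)}$ controlled by the $L^p_{-1,0}(\Omega)$ norm of $f$, so replacing $u$ by $u - v$ and $g$ by $g - \partial_\nu v|_{\partial\Omega}$ (the mean-zero compatibility being preserved) we reduce to $\Delta u = 0$ in $\Omega$ with Neumann datum $g \in B^p_{-1/p}(\partial\Omega)$ of zero mean. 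I would then seek $u$ as a single-layer potential $u(x) = \mathcal{S}\varphi(x) = \int_{\partial\Omega} \Gamma(x-y)\varphi(y)\, d\sigma(y)$, where $\Gamma$ is the fundamental solution of $-\Delta$; the classical jump relations collapse the boundary condition to the integral equation $(-\tfrac{1}{2}I + K^*)\varphi = g$ on $\partial\Omega$, where $K^*$ is the adjoint of the boundary double-layer operator.

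The core tasks then become (i) proving that $-\tfrac{1}{2}I + K^*$ is an isomorphism modulo constants on $B^p_{-1/p}(\partial\Omega)$ for $p$ in the claimed range, and (ii) showing that $\mathcal{S} : B^p_{-1/p}(\partial\Omega) \to W^{1,p}(\Omega)$ is bounded; combining these with the preliminary reduction then delivers \eqref{estigrau}. On a general Lipschitz domain I would settle (i) first for $p = 2$ via Rellich-type identities in the spirit of Verchota and Jerison-Kenig, together with the $L^2$-boundedness of the Cauchy integral on Lipschitz graphs due to Coifman-McIntosh-Meyer. To extend from $p = 2$ to the range $(3/2-\epsilon, 3+\epsilon)$, I would use atomic decompositions and a perturbation/interpolation argument following Fabes-Mendez-Mitrea; the restriction to a neighborhood of $[3/2, 3]$ is intrinsic, since cone-type examples of Jerison-Kenig show that invertibility fails for $p$ far from $2$.

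For $C^1$ domains, $K^*$ becomes compact on $L^p(\partial\Omega)$ for every $p \in (1,\infty)$, so the Fredholm alternative combined with the $L^2$-injectivity already obtained upgrades invertibility to the full range. For convex $\Omega$ I would follow Geng-Shen: approximate $\Omega$ by smooth convex $\Omega_j$, exploit nonnegativity of the second fundamental form inside a good-$\lambda$ Calder\'on-Zygmund argument to produce $W^{1,p}$-bounds uniform in $j$, and then pass to the limit. The hardest step throughout is (i) beyond $p = 2$: the real-variable harmonic analysis on Lipschitz boundaries needed to invert $-\tfrac{1}{2}I + K^*$ off the $L^2$ setting, together with the sharp Besov-space mapping properties in (ii), are precisely the deep content of \cite{FMM98} and \cite{GS10}, and we will simply quote their results rather than reproduce the full argument.
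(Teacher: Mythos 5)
The paper does not prove Theorem~\ref{cite1}: it is stated as a cited result, with the Lipschitz and $C^1$ cases attributed to \cite{FMM98} and the convex case to \cite{GS10}. Your layer-potential sketch --- Newtonian potential to kill $f$, single-layer representation reducing to $(-\tfrac{1}{2}I+K^*)\varphi=g$, Rellich/Coifman--McIntosh--Meyer for the $L^2$ base case, interpolation and atomic decomposition for the $(\tfrac{3}{2}-\epsilon,3+\epsilon)$ range, compactness of $K^*$ on $C^1$ domains, and Geng--Shen's approximation for convex domains --- is an accurate high-level description of what those references actually do, and like the paper you ultimately defer to them for the hard harmonic-analysis content, so this is essentially the same approach.
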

\endgroup

Now we state the existence, uniqueness and regularity result for minimizers of \eqref{1problem}.

\begin{proposition}
\label{1es}
Let $m>0$ and $\Omega$ be a connected Lipschitz domain. Then up to constant factor, minimizer to \eqref{1problem} is unique. Moreover, if $u_m$ is a minimizer to \eqref{1problem} with the constraint
\begin{align}
    \label{prescribtion1}
\int_{\Omega}u_m\, dx=1,    
\end{align}
then there exists $\epsilon=\epsilon(\Omega)>0$ such that
\begin{align}
    \label{estigrau}
\Vert u_m \Vert_{W^{1,p}(\Omega)} \le C(p,m,\Omega),
\end{align}for any $p \in (\frac{3}{2}-\epsilon,3+\epsilon)$. If $\Omega$ is $C^1$ or convex, then the range of $p$ can be $(1,\infty)$.
\end{proposition}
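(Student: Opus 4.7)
The plan is to normalize by $\int_\Omega u\,dx=1$ and apply the direct method to the functional
$$N(u):=\int_\Omega|\nabla u|^2\,dx+\frac{1}{m}\Bigl(\int_{\partial\Omega}|u|\,d\sigma\Bigr)^2.$$
The functional $N$ is convex, weakly lower semicontinuous on $H^1(\Omega)$, and coercive on the affine constraint set by a standard Poincaré-type inequality on bounded Lipschitz domains (the mean of $u$ is pinned, and the Dirichlet part controls the deviation from the mean). Uniqueness then follows from strict convexity of the Dirichlet energy combined with convexity of the boundary term (the square of the convex functional $u\mapsto\int_{\partial\Omega}|u|\,d\sigma$): if $u\neq v$ are two normalized minimizers, then $N((u+v)/2)<(N(u)+N(v))/2$, contradicting minimality. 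Hence $\nabla u=\nabla v$ a.e., so $u-v$ is constant and the integral constraint forces $u=v$. Dropping the normalization yields uniqueness up to a positive constant factor.

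\textbf{Regularization and the $W^{1,p}$ estimate.} The obstacle to applying Theorem \ref{cite1} directly is that $u\mapsto\int_{\partial\Omega}|u|\,d\sigma$ fails to be $C^1$ on the zero set. We therefore replace $|u|$ by $\sqrt{u^2+\epsilon^2}$ and define
$$F_\epsilon(u):=\int_\Omega|\nabla u|^2\,dx+\frac{1}{m}\Bigl(\int_{\partial\Omega}\sqrt{u^2+\epsilon^2}\,d\sigma\Bigr)^2,$$
which is $C^1$ on $H^1(\Omega)$, strictly convex in the same sense as $N$, and admits a unique minimizer $u_\epsilon$ under $\int_\Omega u\,dx=1$. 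The Lagrange multiplier rule then yields, in the weak sense \eqref{solvesense},
$$-\Delta u_\epsilon=c_\epsilon\ \text{in}\ \Omega,\qquad \frac{\partial u_\epsilon}{\partial\nu}=-\frac{1}{m}\Bigl(\int_{\partial\Omega}\sqrt{u_\epsilon^2+\epsilon^2}\,d\sigma\Bigr)\frac{u_\epsilon}{\sqrt{u_\epsilon^2+\epsilon^2}}\ \text{on}\ \partial\Omega,$$
for some constant $c_\epsilon$. Testing minimality of $u_\epsilon$ against the constant $1/|\Omega|$ gives a uniform $H^1$ bound; the trace theorem then gives a uniform bound for $\int_{\partial\Omega}\sqrt{u_\epsilon^2+\epsilon^2}\,d\sigma$, so the Neumann datum is uniformly bounded in $L^\infty(\partial\Omega)$, and the Neumann compatibility relation $c_\epsilon|\Omega|=-\int_{\partial\Omega}\frac{\partial u_\epsilon}{\partial\nu}\,d\sigma$ forces $|c_\epsilon|$ to be uniformly bounded too. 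Theorem \ref{cite1} then delivers the desired $W^{1,p}(\Omega)$-estimate uniformly in $\epsilon$ for the stated range of $p$, with the range widening to $(1,\infty)$ in the $C^1$ or convex case.

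\textbf{Passage to the limit and main obstacle.} From the pointwise bound $|t|\le\sqrt{t^2+\epsilon^2}\le |t|+\epsilon$ one obtains $F_\epsilon\to F$ uniformly on bounded subsets of $H^1(\Omega)$, so any weak $H^1$-cluster point of $\{u_\epsilon\}$ minimizes the original problem under the same normalization; by the uniqueness just established, the full family converges to $u_m$. Weak lower semicontinuity in $W^{1,p}(\Omega)$ transfers the uniform estimate to $u_m$, completing the proof. The step I expect to require the most care is securing the $\epsilon$-independent $L^\infty(\partial\Omega)$ control of the regularized Neumann datum together with the matching bound on $c_\epsilon$, since these are precisely the ingredients that make Theorem \ref{cite1} applicable in a stable way across the regularization.
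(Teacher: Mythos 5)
Your proposal is correct and follows essentially the same strategy as the paper: regularize the boundary term by $\sqrt{u^2+\epsilon^2}$, derive uniform $H^1$ and $L^\infty(\partial\Omega)$ bounds on the Neumann datum and the Lagrange multiplier, apply the Fabes--Mendez--Mitrea / Geng--Shen estimates (Theorem \ref{cite1}) uniformly in $\epsilon$, and pass to the weak limit, with uniqueness secured by convexity of the Dirichlet energy together with convexity of the boundary term and connectivity of $\Omega$. The only cosmetic difference is ordering: you prove uniqueness directly for the unregularized functional first, whereas the paper extracts the $W^{1,p}$ limit $v_m$ and only afterwards identifies it with $u_m$ via the same convexity-plus-connectivity argument (citing \cite[Proposition 2.1]{BBN}); when writing this up you should make explicit that the strict inequality $N((u+v)/2)<(N(u)+N(v))/2$ holds precisely when $\nabla u\neq\nabla v$ on a set of positive measure, and that connectivity is what lets you pass from $\nabla u=\nabla v$ to $u-v\equiv\text{const}$.
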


\begin{proof}
Let 
\begin{align}
\label{Tm}
    T_m(u)=\int_{\Omega}|\nabla u|^2\, dx+\frac{1}{m}\left(\int_{\partial \Omega}|u|\, d\mathscr{H}^{n-1}\right)^2, 
\end{align}
and for a small positive number $0<\delta<1$ we let
\begin{align}
\label{tmd}
    T_m^{\delta}(u)=\int_{\Omega}|\nabla u|^2\, dx+\frac{1}{m}\left(\int_{\partial \Omega}\sqrt{u^2+\delta^2}\, d\mathscr{H}^{n-1}\right)^2.
\end{align}Also, we let
\begin{align}
\label{dd}
   t_m^{\delta}=\inf\{T_m^{\delta}(u):u \in H^1(\Omega), \int_{\Omega}u\, dx=1\}.
\end{align}
By direct method and the Poincar\'e type inequality
\begin{align}
\label{poin0}
    \int_{\Omega} u^2 \le C(\Omega)\left(\int_{\Omega}|\nabla u|^2\, dx+\left(\int_{\Omega}u\, dx\right)^2\right),
\end{align}the infimum in \eqref{dd} can be attained at a function $u_m^{\delta}$  with $\int_{\Omega}u^{\delta}_m\, dx=1$. Hence $u_m^{\delta}$ satisfies the Euler-Lagrange equation
$$\begin{cases}-\Delta u_m^{\delta}=-\frac{\int_{\partial \Omega}g_m^\delta \, d\sigma}{|\Omega|} & \ {\rm{in}}\ \Omega,\\
\displaystyle\frac{\partial u_m^{\delta}}{\partial\nu}=g_m^\delta:=-\big(\frac{1}{m}\int_{\partial\Omega}\sqrt{(u_m^\delta)^2+\delta^2}\,d\sigma\big)\frac{u_m^{\delta}}{\sqrt{(u_m^\delta)^2+\delta^2}} & 
\ {\rm{on}}\ \partial\Omega.
\end{cases}
$$
Since for any $\delta \in (0,1)$, 
\begin{align*}
   \int_{\Omega}|\nabla u_m^\delta|^2\, dx+\frac{1}{m}\left(\int_{\partial \Omega}\sqrt{(u_m^\delta)^2+\delta^2}\, d\mathscr{H}^{n-1}\right)^2= t_m^\delta \le T_m^\delta(\frac{1}{|\Omega|})\le C(m,\Omega),
\end{align*} and by \eqref{poin0} we have
$$
\|u_m^\delta\|_{H^1(\Omega)}\le C(m,\Omega), \, \forall \, 0<\delta\le 1.$$
Since 
\begin{align*}
    |g_m^{\delta}|\le \frac{1}{m}\int_{\partial \Omega}\sqrt{(u_m^\delta)^2+\delta^2}\, d\mathscr{H}^{n-1}\le C(\Omega)\|u_m^\delta\|_{L^2(\partial \Omega)},
\end{align*}
by standard Sobolev trace embedding theorem on Lipschitz domains, we have
$$\|g_m^\delta\|_{L^\infty(\partial\Omega)}\le C(m,\Omega), \ \forall 0<\delta< 1.$$
Hence by Theorem \ref{cite1}, $u_m^\delta \in W^{1,p}(\Omega)$, with
\begin{align}
\label{deltaestimate}
    \|u_m^\delta\|_{W^{1,p}(\Omega)}\le C(m, \Omega), \ \forall 0<\delta< 1,
\end{align}where $p \in (\frac{3}{2}-\epsilon, 3+\epsilon)$ for some $\epsilon=\epsilon(\Omega)>0$ if $\Omega$ is Lipschitz, and $p \in (1,\infty)$ if $\Omega$ is $C^1$ or convex.

Hence we may assume, after taking a possible subsequence of $\delta \rightarrow 0$, that there exists $v_m\in W^{1,p}(\Omega)$ such that
$$u_m^\delta\rightharpoonup v_m \ {\rm{in}}\ W^{1,p}(\Omega), \ \forall\, \mbox{$p$ belongs to the range in Theorem \ref{cite1}}.$$
Hence $\int_{\Omega}v_m\, dx=1$. Now we want to show that $v_m$ is also a minimizer to \eqref{1problem}, and clearly it suffices to show that  $T_m(v)\le T_m(w)$ for any function $w\in H^1(\Omega)$ with $\int_{\Omega}w\, dx=1$. This actually follows from the lower semicontinuity that
$$T_m(v_m)\le\liminf_{\delta\rightarrow 0}T_m^\delta(u_m^\delta)\le \liminf_{\delta\rightarrow 0} T_m^\delta(w)
=T_m(w).$$
Now let us prove $v_m=u_m$. Indeed, if $v_m \ne u_m$, then $\tilde{u}_m:=\frac{v_m+u_m}{2}$ also satisfies $\int_{\Omega}u_m\, d\sigma=1$. Then $v_m=u_m$ directly follows from  $$T_m(\tilde{u}_m)\ge \frac{T_m(u_m)+T_m(v_m)}{2},$$and the connectivity of $\Omega$. For the details of this step we refer to \cite[Proposition 2.1]{BBN}. Hence we have shown existence and uniqueness of minimizers to \eqref{1problem} with constraint \eqref{prescribtion1}, and the $W^{1,p}$ regularity \eqref{estigrau} follows from \eqref{deltaestimate} and uniqueness.
\end{proof}

\begin{remark}
\label{Kenigpoint}
In fact, for any bounded Lipschitz domain in $\mathbb{R}^n$, the minimizer $u_m$ in Proposition \ref{1es} actually belongs to the H\"older space $C^{\alpha}(\bar{\Omega})$ for some $\alpha \in (0,1)$ depending on $\Omega$. 
\end{remark}

\begin{proof}
This is essentially a consequence of Kenig-Pipher\cite[Corollary 2.14 and Theorem 2.23]{KP93}, where the authors introduce the Neumann function for elliptic equation of divergence form on balls with bounded measurable coefficients, and proved H\"older continuity of Neumann function and a representation formula for generalized solutions. As a consequence, global H\"older estimate is valid for harmonic function with bounded Neumann data on the unit ball. Then by a change of variables which is explained for example in \cite{Kenig}, this is also valid on starlike Lipschitz domains. For the case of general bounded Lipschitz domains similar arguments applies too, or a localization argument would also work to reduce to star-like Lipschitz domanis. Hence we have H\"older estimate for harmonic function with bounded Neumann data on Lipschitz domains. For the inhomogeneous Possion equation \eqref{Neumannproblem} on a bounded Lipschitz domain, if $f \in L^p(\Omega)$ for some $p>n$ and $g \in L^{\infty}(\partial \Omega)$, then by extending $f$ to be zero outside, and according to $W^{2,p}$ estimate and Sobolev embedding, by adding a $C^{1,\alpha}$ function, we reduce to the harmonic function case with bounded Neumann data. Hence we still have H\"older estimate for solution to \eqref{Neumannproblem} with $f \in L^p(\Omega)$ ($p>n$) and $g \in L^{\infty}(\partial \Omega)$. Hence $u_m$ has H\"older regularity on $\bar{\Omega}$, by exact same argument as in the proof of Proposition \ref{1es}.
\end{proof}

\medskip

Different from the first one, minimizers to the second problem are usually not unique even up to a constant factor. Nevertheless, we still have:
\begin{proposition}
\label{2es}
Let $m>0$ and $\Omega$ be a bounded Lipschitz domain. Then there exists a minimizer $u_m$ to \eqref{2problem}. Moreover, if $u_m$ satisfies
\begin{align}
    \label{prescribtion2}
\int_{\Omega}u_m^2\, dx=1,    
\end{align}
then there exists $\epsilon=\epsilon(\Omega)>0$ such that
\begin{align}
    \label{estigrau2}
\Vert u_m \Vert_{W^{1,p}(\Omega)} \le C(m,p,\Omega),
\end{align}for any $p \in (\frac{3}{2}-\epsilon,3+\epsilon)$. If $\Omega$ is $C^1$ or convex, then the range of $p$ can be $(1,\infty)$.
\end{proposition}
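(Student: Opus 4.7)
The plan mirrors Proposition \ref{1es}, but since uniqueness of minimizers to \eqref{2problem} is unavailable, the Euler-Lagrange equation must be derived directly for an arbitrary normalized minimizer rather than obtained as a by-product of regularization. Existence itself comes from the direct method: a minimizing sequence $\{u_k\}\subset H^1(\Omega)$ with $\int_\Omega u_k^2\,dx=1$ has uniformly bounded Dirichlet energy and is therefore bounded in $H^1(\Omega)$. Since $\Omega$ is Lipschitz, the embedding $H^1(\Omega)\hookrightarrow L^2(\Omega)$ and the trace into $L^1(\partial\Omega)$ are compact, so a weak $H^1$-limit $u_m$ preserves the normalization $\int_\Omega u_m^2\,dx=1$; weak lower semicontinuity of the Dirichlet energy together with continuity (under strong trace convergence) of $\int_{\partial\Omega}|\cdot|\,d\sigma$ show that $u_m$ minimizes.

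For the regularity of any normalized minimizer $u_m$, set $\lambda_m=T_m(u_m)$ and $M=\int_{\partial\Omega}|u_m|\,d\sigma$. Fix $\phi\in H^1(\Omega)$. The inequality $T_m(u_m+t\phi)\ge\lambda_m\int_\Omega(u_m+t\phi)^2\,dx$ holds for every $t\in\mathbb{R}$ with equality at $t=0$. The only non-smooth piece is $\bigl(\int_{\partial\Omega}|u_m+t\phi|\,d\sigma\bigr)^2$, but it is Lipschitz and convex in $t$, with right and left derivatives at $0$ equal to
\begin{align*}
2M\left(\int_{\{u_m\ne 0\}\cap\partial\Omega}\!\!\operatorname{sgn}(u_m)\,\phi\,d\sigma\;\pm\;\int_{\{u_m=0\}\cap\partial\Omega}\!\!|\phi|\,d\sigma\right).
\end{align*}
Combining the $t\to 0^{\pm}$ variational inequalities produces a measurable $\eta:\partial\Omega\to[-1,1]$ with $\eta=\operatorname{sgn}(u_m)$ on $\{u_m\ne 0\}$ such that
\begin{align*}
\int_\Omega\nabla u_m\cdot\nabla\phi\,dx+\frac{M}{m}\int_{\partial\Omega}\eta\,\phi\,d\sigma=\lambda_m\int_\Omega u_m\phi\,dx,\qquad\forall\,\phi\in H^1(\Omega).
\end{align*}
Setting $\phi\equiv 1$ yields $\int_\Omega f\,dx=-\int_{\partial\Omega}g\,d\sigma$ for $f=\lambda_m u_m$ and $g=-(M/m)\eta$, which is precisely the compatibility condition required by Theorem \ref{cite1}.

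Then I would invoke Theorem \ref{cite1} on the Neumann problem with source $f$ and datum $g$. Since $g\in L^\infty(\partial\Omega)$ embeds in $B^p_{-1/p}(\partial\Omega)$ for every $p$, it remains only to place $f=\lambda_m u_m$ in $L^p_{-1,0}(\Omega)$. Using $u_m\in H^1(\Omega)\hookrightarrow L^{2^*}(\Omega)$ (Sobolev) and the duality characterization of $L^p_{-1,0}$, one checks directly that $f\in L^p_{-1,0}(\Omega)$ throughout $p\in(\tfrac{3}{2}-\epsilon,3+\epsilon)$; combined with the $W^{1,p}$-estimate of Theorem \ref{cite1} and the Poincar\'e inequality (using $\int_\Omega u_m^2\,dx=1$ to control the mean), this yields \eqref{estigrau2}. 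On $C^1$ or convex domains, Theorem \ref{cite1} is valid for all $p\in(1,\infty)$, and a short bootstrap (Sobolev embedding $W^{1,p_0}\hookrightarrow L^{s}$ to upgrade the integrability of $u_m$, hence of $f$, followed by reapplication of Theorem \ref{cite1}) iteratively enlarges the exponent to cover the whole range.

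The main technical obstacle is the sub-gradient extraction in the second step, which handles the non-smoothness of $\int_{\partial\Omega}|\cdot|\,d\sigma$ directly via convex analysis rather than through smoothing. This is the essential replacement of the regularization-plus-uniqueness trick used in Proposition \ref{1es}: because \eqref{2problem} admits multiple minimizers, identifying a limit from a smoothed problem would deliver regularity for only one particular minimizer, not for every normalized one as the statement requires.
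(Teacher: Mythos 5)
Your argument is essentially the paper's: existence by the direct method, a one-sided variational inequality in $t$ that yields a Neumann datum $g\in L^\infty(\partial\Omega)$ with $\|g\|_{L^\infty(\partial\Omega)}\le\frac{1}{m}\int_{\partial\Omega}|u_m|\,d\sigma$, and then Theorem \ref{cite1} plus a Sobolev bootstrap; the convex-subdifferential phrasing is a repackaging of the difference-quotient inequalities \eqref{shou1}--\eqref{shou2} the paper writes out. One imprecision to correct: you restrict the bootstrap to the $C^1$/convex case, asserting that $u_m\in H^1(\Omega)\hookrightarrow L^{2^*}(\Omega)$ already places $f=\lambda_m u_m$ in $L^p_{-1,0}(\Omega)$ for every $p\in(\tfrac{3}{2}-\epsilon,3+\epsilon)$; that containment amounts to $2^*\ge \frac{3n}{n+3}$ near $p=3$ and fails once $n>12$, so the iteration $p_{k+1}=\min\{\bar{p},p_k^*\}$ starting from $p_0=2$ that the paper runs is needed on general Lipschitz domains as well. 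Your own bootstrap remark closes this immediately once invoked in both cases.
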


\begin{proof}
The existence of minimizers follows from direct method and the Poincar\'e inequality
\begin{align}
\label{poin}
    \int_{\Omega} u^2 \le C(\Omega)\left(\int_{\Omega}|\nabla u|^2\, dx+\left(\int_{\partial \Omega}|u|\, dx\right)^2\right).
\end{align}
Let $\lambda_m$ be the functional in \eqref{2problem}, that is,
\begin{align*}
    \lambda_m(u)=\frac{\int_{\Omega}|\nabla u|^2\, dx+\left(\frac{1}{m}\int_{\partial \Omega}|u|\, d\mathscr{H}^{n-1}\right)^2}{\int_{\Omega}u^2\, dx}.
\end{align*}We also let $u_m$ be a minimizer satisfying \eqref{prescribtion2} and let $\lambda_m=\lambda_m(u_m)$. For any $\phi \in H^1(\Omega)$, using \eqref{prescribtion2}, we have
\begin{align}
\label{chafen}
    \lambda_m(u_m+t\phi)-\lambda_m
    =&\frac{2t\int_{\Omega}\nabla u_m \nabla \phi \, dx+t^2\int_{\Omega}|\nabla \phi|^2\, dx}{\int_{\Omega}(u_m+t\phi)^2\, dx}-\frac{\left(2t\int_{\Omega}u_m\phi\, dx+t^2\int_{\Omega}\phi^2\, dx\right)\lambda_m}{\int_{\Omega}(u_m+t\phi)^2\, dx}\nonumber\\
    &+\frac{\frac{1}{m}\left(\int_{\partial \Omega}|u_m+t\phi|\, d\sigma+\int_{\partial \Omega}|u_m|\, d\sigma\right)\left(\int_{\partial \Omega}|u_m+t\phi|\, d\sigma-\int_{\partial \Omega}|u_m|\, d\sigma\right)}{\int_{\Omega}(u_m+t\phi)^2\, dx}.
\end{align}
Since \begin{align*}
    \int_{\partial \Omega}|u_m+t\phi|\, d\sigma-\int_{\partial \Omega}|u_m|\, d\sigma\le |t|\int_{\partial \Omega}|\phi|\, d\sigma,
\end{align*}from \eqref{prescribtion2} and \eqref{chafen} we have that
\begin{align}
\label{shou1}
    \liminf_{t\rightarrow 0^+}\frac{\lambda_m(u_m+t\phi)-\lambda_m}{t}\le 2\int_{\Omega}\nabla u_m \nabla \phi \, dx-2\lambda_m\int_{\Omega}u_m\phi\, dx+\frac{2}{m}\left(\int_{\partial \Omega}|u_m|\, d\sigma \right)\left(\int_{\partial \Omega}|\phi|\, d\sigma\right),
\end{align}and that
\begin{align}
\label{shou2}
    \liminf_{t\rightarrow 0^-}\frac{\lambda_m(u_m+t\phi)-\lambda_m}{t}\ge 2\int_{\Omega}\nabla u_m \nabla \phi \, dx-2\lambda_m\int_{\Omega}u_m\phi\, dx-\frac{2}{m}\left(\int_{\partial \Omega}|u_m|\, d\sigma \right)\left(\int_{\partial \Omega}|\phi|\, d\sigma\right).
\end{align}Now by \eqref{shou1}-\eqref{shou2} and $\lambda_m(u_m+t\phi)\ge \lambda_m$, we have
\begin{align*}
\Big|\int_{\Omega}\nabla u_m\nabla \phi \, dx-\lambda_m\int_{\Omega}u_m\phi \, dx\Big| \le \frac{1}{m}\left(\int_{\partial \Omega}|u_m|\, d\sigma\right)\left( \int_{\partial \Omega}|\phi|\, d\sigma\right).
\end{align*}
Hence there exists $g \in L^{\infty}(\partial \Omega)$ with
\begin{align}
\label{g1}
    \|g\|_{L^{\infty}(\partial \Omega)} \le \frac{1}{m}\int_{\partial \Omega}|u_m|\, d\sigma, 
\end{align}
such that
\begin{align*}
    \int_{\Omega}\nabla u_m \nabla \phi \, dx-\lambda_m\int_{\Omega}u_m\phi\,dx-\int_{\partial \Omega}g\phi\, d\sigma=0.
\end{align*}
Hence $u_m$ is a solution to the following equation
\begin{align*}
    \begin{cases}
-\Delta u=\lambda_m u \quad &\mbox{in $\Omega$}\\
\frac{\partial u}{\partial \nu}=g \quad &\mbox{on $\partial \Omega$}.
\end{cases}
\end{align*}
Since \begin{align}
    \label{lizhi}
\int_{\Omega}|\nabla u_m|^2\, dx+\frac{1}{m}\left(\int_{\partial \Omega}|u_m|\, d\sigma \right)^2 =\lambda_m(u_m)\le \lambda_D(\Omega),
\end{align} where $\lambda_D(\Omega)$ is the first eigenvalue of Dirichlet Laplacian on $\Omega$, we have
\begin{align}
    \label{ggg}
\|u_m\|_{H^1(\Omega)}\le C(\Omega).  
\end{align}
Also, by \eqref{g1} and \eqref{lizhi}, we have
\begin{align*}
     \|g\|_{L^{\infty}(\partial \Omega)}\le\frac{1}{\sqrt{m}}\sqrt{\lambda_D(\Omega)}.
\end{align*}
Therefore, by Theorem \ref{cite1}, \eqref{ggg} and Sobolev embedding, we have 
\begin{align}
    \label{near}
\|u_m\|_{W^{1,p}(\Omega)} \le C(m,p,\Omega),
\end{align}
for $p=p_1=\min\{\bar{p}, 2^*\}$, where $\bar{p}$ is as in Theorem \ref{cite1} and $2^*=\frac{2n}{n-2}$. Actually \eqref{near} also holds for $p_{k+1}=\min\{\bar{p}, p_k^*\}$, where $p_k^*=\frac{np_k}{n-p_k}$ and $p_0=2$. Sending $k \rightarrow \infty$, we conclude our proposition.
\end{proof}

Theorem \ref{global} is then a consequence of Proposition \ref{1es}, Remark \ref{Kenigpoint} and Proposition \ref{2es}.

\section{Concentration breaking for maximizing heat content}
In this section, we prove concentration breaking result for the first problem \eqref{1problem}. Recall that this corresponds to the thermal insulation problem of maximizing heat content.

We first need the following lemma.
\begin{lemma}
\label{jianhua}
Let $u$ be a weak solution to the following equation.
\begin{align}
    \label{dengjiau0}
\begin{cases}
-\Delta u=1 \quad &\mbox{in $\Omega$}\\
\frac{\partial u}{\partial \nu}=-\frac{1}{m}\int_{\partial \Omega}u d\sigma &\mbox{on $\partial \Omega$}.
\end{cases} 
\end{align}
If $u\ge 0$ on $\partial \Omega$, then $u$ must be a minimizer to \eqref{1problem}.
\end{lemma}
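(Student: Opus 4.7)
The plan is to exploit convexity of the functional together with the sign hypothesis $u\ge 0$ on $\partial\Omega$ to obtain a one-sided subgradient inequality that pins $u$ as a global minimizer. Because the functional $F(v):=T_m(v)/\bigl(\int_\Omega v\,dx\bigr)^2$ defined in \eqref{1problem} is invariant under positive scaling $v\mapsto cv$, minimizing $F$ over $H^1(\Omega)\setminus\{0\}$ is equivalent to minimizing $T_m$ (see \eqref{Tm}) over the affine slice $\{v\in H^1(\Omega):\int_\Omega v\,dx=\int_\Omega u\,dx\}$; the case $\int_\Omega v=0$ is trivial since then $F(v)=+\infty$. Thus it suffices to establish the single inequality
$$T_m(v)\ge T_m(u)+2\int_\Omega (v-u)\,dx\qquad\text{for all }v\in H^1(\Omega),$$
which immediately gives $T_m(v)\ge T_m(u)$ on the affine slice and hence that $u$ minimizes $F$.

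To produce this inequality I will combine three ingredients. First, convexity of the Dirichlet integral yields $\int_\Omega|\nabla v|^2\,dx\ge\int_\Omega|\nabla u|^2\,dx+2\int_\Omega\nabla u\cdot\nabla(v-u)\,dx$. Second, the hypothesis $u\ge 0$ on $\partial\Omega$ combined with the pointwise inequality $|v|\ge v$ gives the linearization $\int_{\partial\Omega}|v|\,d\sigma\ge\int_{\partial\Omega}u\,d\sigma+\int_{\partial\Omega}(v-u)\,d\sigma$; since $\int_{\partial\Omega}u\,d\sigma\ge 0$, the tangent-line inequality for $t\mapsto t^2$ on $[0,\infty)$ upgrades this to
$$\frac{1}{m}\left(\int_{\partial\Omega}|v|\,d\sigma\right)^2\ge\frac{1}{m}\left(\int_{\partial\Omega}u\,d\sigma\right)^2+\frac{2}{m}\left(\int_{\partial\Omega}u\,d\sigma\right)\int_{\partial\Omega}(v-u)\,d\sigma.$$
Third, the weak form of \eqref{dengjiau0} tested against $\phi=v-u$ evaluates
$$\int_\Omega\nabla u\cdot\nabla(v-u)\,dx=\int_\Omega(v-u)\,dx-\frac{1}{m}\left(\int_{\partial\Omega}u\,d\sigma\right)\int_{\partial\Omega}(v-u)\,d\sigma.$$
Adding the first two displayed bounds and substituting the third, the boundary cross-terms cancel exactly, leaving $T_m(v)-T_m(u)\ge 2\int_\Omega(v-u)\,dx$, as desired.

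The main obstacle is the non-differentiability of the boundary term $v\mapsto\int_{\partial\Omega}|v|\,d\sigma$; this is precisely why the sign hypothesis $u\ge 0$ on $\partial\Omega$ is essential, since it lets one take the constant function $\mathbf{1}$ as a subgradient of this map at $u$, producing exactly the linear boundary term that cancels against the PDE computation. As a preliminary sanity check one should note that integrating \eqref{dengjiau0} over $\Omega$ gives the compatibility relation $\int_{\partial\Omega}u\,d\sigma=\frac{m|\Omega|}{P(\Omega)}>0$, so in particular $u\not\equiv 0$ and $F(u)$ is well-defined; traces are interpreted in the usual sense on the Lipschitz boundary.
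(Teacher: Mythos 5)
Your proof is correct and is essentially the same argument as the paper's: your three displayed inequalities (convexity of the Dirichlet integral, the tangent-line bound for the squared boundary term, and the weak form tested with $v-u$) recombine exactly into the paper's identity \eqref{t1} followed by the Young-inequality step, just reorganized as a subgradient inequality $T_m(v)\ge T_m(u)+2\int_\Omega(v-u)\,dx$. The explicit reduction-to-the-affine-slice remark and the compatibility computation $\int_{\partial\Omega}u\,d\sigma=m|\Omega|/P(\Omega)$ are harmless additions but do not change the substance.
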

\begin{proof}
Let $T_m$ be the functional as in the proof of Proposition \ref{1es}. To show $u$ is a minimizer, if suffices to show that \begin{align}
    \label{t}
T_m(u)\le T_m(v),    
\end{align}
for  any $v \in H^1(\Omega)$ with 
\begin{align*}
    \int_{\Omega}u\, dx=\int_{\Omega}v\, dx.
\end{align*}
In fact, multiplying $\eqref{dengjiau0}_1$ by $(u-v)$, taking integral over $\Omega$, and integration by parts lead to
\begin{align}
\label{t1}
    \int_{\Omega}|\nabla u|^2\, dx+\frac{1}{m}\left(\int_{\partial \Omega}u\, d\sigma\right)^2=\int_{\Omega}\nabla u\nabla v\, dx+\frac{1}{m}\left(\int_{\partial \Omega}u\,d\sigma\right)\left( \int_{\partial \Omega}v\, d\sigma\right).
\end{align}
The RHS above is bounded above by
\begin{align}
    \label{t2}
\frac{1}{2}\int_{\Omega}|\nabla u|^2\, dx+\frac{1}{2}\int_{\Omega}|\nabla v|^2\, dx+\frac{1}{2m}\left(\int_{\partial \Omega}u\,d\sigma\right)^2+\frac{1}{2m}\left(\int_{\partial \Omega}v\,d\sigma\right)^2.    
\end{align}
Hence \eqref{t1}-\eqref{t2} imply
\begin{align*}
    \int_{\Omega}|\nabla u|^2\, dx+\frac{1}{m}\left(\int_{\partial \Omega}u\,d\sigma\right)^2 \le  \int_{\Omega}|\nabla v|^2\, dx+\frac{1}{m}\left(\int_{\partial \Omega}v\,d\sigma\right)^2.
\end{align*}Since $u\ge 0$ on $\partial \Omega$, we immediately have \eqref{t}. This finishes the proof.
\end{proof}

Now we are ready to prove Theorem \ref{concentration}. In fact, we will prove a stronger version on domains with weaker regularity.

\begin{theorem}
\label{jiaqiang}
For any bounded Lipschitz domain $\Omega \subset \mathbb{R}^n$, there exists $m_1=m_1(\Omega)\ge 0$ such that when $m>m_1$, any minimizer to \eqref{1problem} must be nowhere vanishing on $\partial \Omega$, while if $m_1>0$ and $m \in (0,m_1)$, then any minimizer to \eqref{1problem} must be vanishing on a subset of $\partial \Omega$ with positive $\mathscr{H}^{n-1}$ measure. If $\Omega$ is further assumed to be $C^2$, then $m_1>0$ as long as $\Omega$ is not a ball.
\end{theorem}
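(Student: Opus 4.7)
The plan is to produce an explicit one-parameter family of candidate minimizers built from the geometric solution $u_0$ of \eqref{u0}, and then to read off the breaking threshold directly using the uniqueness in Proposition \ref{1es} together with the sufficiency criterion of Lemma \ref{jianhua}. Throughout I may assume any minimizer $u_m$ is nonnegative, since replacing $u_m$ by $|u_m|$ strictly decreases the Rayleigh quotient in \eqref{1problem} whenever $u_m$ changes sign. Note that $u_0$ exists on every bounded Lipschitz domain (Theorem \ref{cite1}, with compatibility trivially verified) and is H\"older continuous up to $\partial \Omega$ by the argument of Remark \ref{Kenigpoint}, so the quantities $\overline{u_0} := P(\Omega)^{-1}\!\int_{\partial \Omega} u_0\, d\sigma$, $\delta(\Omega) := \overline{u_0} - \min_{\partial \Omega} u_0 \geq 0$, and $m_1 := \delta(\Omega) P^2(\Omega)/|\Omega|$ are all well-defined.

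For each $m > 0$ the key ansatz is
\[ u^{(m)} := u_0 - \overline{u_0} + \frac{m |\Omega|}{P^2(\Omega)}, \]
which by construction satisfies $\int_{\partial \Omega} u^{(m)}\, d\sigma = m|\Omega|/P(\Omega)$ and therefore weakly solves \eqref{dengjiau0}, with $\min_{\partial \Omega} u^{(m)} = m|\Omega|/P^2(\Omega) - \delta(\Omega)$. When $m > m_1$ this minimum is strictly positive, so Lemma \ref{jianhua} yields that $u^{(m)}$ is a minimizer of \eqref{1problem}; uniqueness then forces every minimizer to be a positive multiple of $u^{(m)}$, hence strictly positive on $\partial \Omega$. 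This settles the regime $m > m_1$.

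For $m \in (0, m_1)$ (assuming $m_1 > 0$), I would argue by contradiction. Suppose some minimizer $u_m$, normalized so that the Euler--Lagrange system \eqref{EulerLagrangeforuintro} holds, vanishes only on a subset of $\partial \Omega$ with $\mathscr{H}^{n-1}$-measure zero. Then $\partial u_m/\partial \nu = -m^{-1}\!\int_{\partial \Omega} u_m\, d\sigma$ holds almost everywhere on $\partial \Omega$, so $u_m$ weakly solves \eqref{dengjiau0}. Integration by parts pins down $\int_{\partial \Omega} u_m\, d\sigma = m|\Omega|/P(\Omega)$, and uniqueness of the Neumann problem (up to additive constants) forces $u_m \equiv u^{(m)}$. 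But $u^{(m)} \geq 0$ on $\partial \Omega$ requires $m \geq m_1$, contradicting $m < m_1$. Finally, when $\Omega$ is $C^2$ and not a ball, $u_0$ satisfies $-\Delta u_0 = 1$ and carries constant Neumann data $-|\Omega|/P(\Omega)$, so Serrin's overdetermined theorem \cite{Serrin} rules out $u_0$ being constant on $\partial \Omega$; by continuity this yields $\delta(\Omega) > 0$, whence $m_1 > 0$.

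The main obstacle I anticipate is in the third step: rigorously promoting the a.e.\ Neumann identity into a weak solution of \eqref{dengjiau0} inside the Besov trace framework underlying Theorem \ref{cite1}. This relies on the global H\"older regularity from Remark \ref{Kenigpoint}, which makes $\{u_m = 0\} \cap \partial \Omega$ closed (so that its complement is an open subset of $\partial \Omega$ where the a.e.\ condition can be tested against smooth functions supported there), together with care in interpreting $\int_{\partial \Omega} u_m\, d\sigma$ through the trace operator rather than as a pointwise boundary value.
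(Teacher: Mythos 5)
Your proof is correct and follows essentially the same route as the paper's: the ansatz $u^{(m)}$ is exactly the paper's $u_m'$ built from $u_0$ by shifting by a constant, the sufficiency for $m>m_1$ comes from Lemma \ref{jianhua} together with uniqueness from Proposition \ref{1es}, and Serrin's theorem gives $\delta(\Omega)>0$ for non-ball $C^2$ domains. The extra care you supply for $m\in(0,m_1)$---explicitly ruling out a nonempty but $\mathscr{H}^{n-1}$-null vanishing set by observing that the a.e.\ Neumann identity would still produce a weak solution of \eqref{dengjiau0} and hence the same contradiction---is a step the paper compresses, and the obstacle you flag at the end is not a real one, since the Neumann datum supplied by the variational inequality (as in the proof of Proposition \ref{2es}) is already an $L^\infty(\partial\Omega)$ function, so once it agrees a.e.\ with the constant $-m^{-1}\int_{\partial\Omega}u_m\,d\sigma$ it agrees with it as an $L^\infty$ datum and the weak formulation is immediate.
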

\begin{proof}
Let $u_m$ be a solution to \eqref{1problem}, and we may assume that $u_m \ge 0$ in $\Omega$, otherwise we can take the absolute value and does not increase the minimizing functional. 

By Proposition \ref{1es} and Remark \ref{Kenigpoint}, $u_m$ belong to $C^{\alpha}(\bar{\Omega})$ for some $\alpha \in (0,1)$ when $\Omega$ is Lipschitz, and for any $\alpha \in (0,1)$ when $\Omega$ is $C^1$ or convex. Hence, by direct calculations, we obtain that up to a constant factor, $u_m$ satisfies the following boundary value problem
\begin{equation}\label{EulerLagrangeforu'}
\begin{cases}
-\Delta u=1 & \ {\rm{in}}\ \Omega,\\
\displaystyle\frac{\partial u}{\partial\nu}=-\frac{1}{m}\int_{\partial\Omega} u\,d\sigma & \ {\rm{on}}\ \partial\Omega\cap\{x: u(x)>0\},\\
\displaystyle\frac{\partial u}{\partial\nu}\ge-\frac{1}{m}\int_{\partial\Omega} u\,d\sigma & \ {\rm{on}}\ \partial\Omega\cap\{x: u(x)=0\}.
\end{cases}
\end{equation}

Let $u_m'$ be the solution to \eqref{dengjiau0},and $u_0$ be the solution to \begin{align}
    \label{dengjiau1}
\begin{cases}
-\Delta u=1 \quad &\mbox{in $\Omega$}\\
\frac{\partial u}{\partial \nu}=-\frac{|\Omega|}{P(\Omega)} &\mbox{on $\partial \Omega$.}
\end{cases} 
\end{align}
Note that $u_m'$ can be uniquely determined in terms of $u_0$ by the following relation:
\begin{align}
\label{um}
    u_m'=u_0+\frac{m|\Omega|}{P^2(\Omega)}-\frac{1}{P(\Omega)}\int_{\partial \Omega}u_0d\sigma.
\end{align}
Let
\begin{align}
\label{deltaomega}
\delta_{\Omega}=\frac{1}{P(\Omega)}\int_{\partial \Omega}u_0d\sigma-\min_{\partial \Omega}u_0, 
\end{align}which is well defined since $u_0 \in C^{\alpha}(\bar{\Omega})$ for some $\alpha \in (0,1)$. Note that the constant $\delta_{\Omega}$ only depends on the shape of $\Omega$. 

By \eqref{um}, we know that
\begin{align}
    \label{deltaomega'}
\min_{\partial \Omega}u_m'=-\delta_{\Omega}+\frac{m|\Omega|}{P^2(\Omega)}.
\end{align}

Hence from \eqref{deltaomega'} we know that as $m>m_1:= \delta_{\Omega}\frac{P^2(\Omega)}{|\Omega|}$, $u_m' > 0$ on $\partial \Omega$. Hence by Lemma \ref{jianhua}, $u_m'$ is a minimizer to \eqref{1problem}. By uniqueness property shown in Proposition \ref{1es}, it entails that $u_m'=u_m$ up to a constant factor, and hence $u_m>0$ everywhere on $\partial \Omega$. If $\delta_{\Omega}>0$ and $0<m<\delta_{\Omega}\frac{P^2(\Omega)}{|\Omega|}$, then according to \eqref{deltaomega'}, $\min_{\partial \Omega}u_m'<0$ and hence \eqref{EulerLagrangeforu'} does not admit a solution which is everywhere positive on $\partial \Omega$. This means that $u_m$ must be vanishing on a subset of $\partial \Omega$ with positive $\mathscr{H}^{n-1}$ measure. 

By the famous result of Serrin\cite{Serrin}, we know that when $\Omega$ is $C^2$, $u_0$ cannot be a constant on $\partial \Omega$, unless $\Omega$ is a ball, and hence $\delta_{\Omega}>0$ as long as $\Omega$ is not a ball. 
\end{proof}

\section{Concentration breaking for minimizing temperature decay}
In this section, we will prove concentration breaking result for the second problem \eqref{2problem}. Recall that this corresponds to the thermal insulation problem of minimizing temperature decay.

Before proving Theorem \ref{q2main}, we first prove the following simple lemma on comparison of eigenvalues with different boundary condition.
\begin{lemma}
\label{com}
Let $\Omega$ be a bounded Lipschitz domain in $\mathbb{R}^n$ and define
\begin{align}
\label{kappa1}
    \kappa_1(\Omega):=\inf\Big\{\frac{\int_{\Omega}|\nabla u|^2 \, dx}{\int_{\Omega}u^2\, dx}: \int_{\partial \Omega}u\, d\sigma=0 \Big\}.
\end{align}
Then $\kappa_1(\Omega) \le \mu_2(\Omega)$.
\end{lemma}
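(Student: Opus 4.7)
The plan is to exploit the variational characterization
\[
\mu_2(\Omega) = \inf\left\{ \frac{\int_{\Omega}|\nabla u|^2\,dx}{\int_{\Omega}u^2\,dx} : u \in H^1(\Omega)\setminus\{0\},\ \int_{\Omega}u\,dx = 0 \right\}
\]
and produce a single explicit competitor for $\kappa_1(\Omega)$ built from a second Neumann eigenfunction. Both $\kappa_1$ and $\mu_2$ are defined by minimizing the Dirichlet energy over a codimension-one affine subspace of $H^1(\Omega)$, and the two constraints differ only by a shift by a constant; this suggests that subtracting an appropriate constant should move us from one constraint set to the other while controlling the Rayleigh quotient.

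Concretely, I would pick a second Neumann eigenfunction $\phi_2 \in H^1(\Omega)$, normalized by $\int_{\Omega}\phi_2\,dx = 0$ and satisfying $\int_{\Omega}|\nabla \phi_2|^2\,dx = \mu_2(\Omega)\int_{\Omega}\phi_2^2\,dx$. Then set
\[
c := \frac{1}{P(\Omega)}\int_{\partial\Omega}\phi_2\,d\sigma, \qquad u := \phi_2 - c,
\]
so that $\int_{\partial\Omega} u\,d\sigma = 0$, i.e.\ $u$ is admissible for $\kappa_1(\Omega)$ (one should also check $u \not\equiv 0$; otherwise $\phi_2$ would be a constant, contradicting $\mu_2>0$ for a bounded Lipschitz domain).

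The key computation is just the Pythagorean identity coming from orthogonality of $\phi_2$ and constants in $L^2(\Omega)$: since $\int_{\Omega}\phi_2\,dx = 0$,
\[
\int_{\Omega} u^2\,dx = \int_{\Omega}\phi_2^2\,dx + c^2|\Omega| \;\ge\; \int_{\Omega}\phi_2^2\,dx,
\]
while $\int_{\Omega}|\nabla u|^2\,dx = \int_{\Omega}|\nabla\phi_2|^2\,dx$. Hence
\[
\kappa_1(\Omega) \;\le\; \frac{\int_{\Omega}|\nabla u|^2\,dx}{\int_{\Omega}u^2\,dx} \;=\; \frac{\mu_2(\Omega)\int_{\Omega}\phi_2^2\,dx}{\int_{\Omega}\phi_2^2\,dx + c^2|\Omega|} \;\le\; \mu_2(\Omega),
\]
which is the desired inequality.

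There is no real obstacle here; the only point that deserves a brief comment is that the existence of a minimizing $\phi_2$ (equivalently, that the infimum defining $\mu_2$ is attained) uses compactness of the embedding $H^1(\Omega) \hookrightarrow L^2(\Omega)$ on bounded Lipschitz domains, which is standard. If one prefers to avoid the existence of $\phi_2$ altogether, the same argument runs with a near-minimizing sequence for $\mu_2$ in place of $\phi_2$, yielding the inequality in the limit.
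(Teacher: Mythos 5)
Your proof is correct and matches the paper's argument essentially verbatim: both take a second Neumann eigenfunction, subtract its boundary average to make it admissible for $\kappa_1$, and use $\int_\Omega \phi_2\,dx = 0$ to get the Pythagorean identity $\int_\Omega u^2 = \int_\Omega \phi_2^2 + c^2|\Omega| \ge \int_\Omega \phi_2^2$. Your remarks about nondegeneracy of the competitor and attainment of $\mu_2$ are sound but unnecessary padding; the core computation is the same.
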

\begin{proof}
Let $v$ be an eigenfunction of the second Neumann Laplacian eigenvalue, and let $$\bar{v}=\frac{1}{P(\Omega)}\int_{\partial \Omega}v\, d\sigma\quad \mbox{and}\quad  w=v-\bar{v}.$$ Hence $\int_{\partial \Omega} w\,d\sigma=0$, and 
\begin{align}
\label{guiju}
    \frac{\int_{\Omega}|\nabla w|^2\, dx}{\int_{\Omega}w^2\, dx}=\frac{\int_{\Omega}|\nabla v|^2\, dx}{\int_{\Omega}(v^2+\bar{v}^2)\, dx}\le \mu_2(\Omega),
\end{align}where we have used that $\int_{\Omega}v\,dx=0$. Since $w$ is a valid trial function for \eqref{kappa1}, the lemma is proved.
\end{proof}

\begin{remark}
\label{zhu}
It can be seen from the proof that $\kappa_1=\mu_2$ only if the trace of second fundamental modes of Neumann Laplacian has mean zero. So far we do not know whether this is also a suffcient condition, but a different necessary and sufficient condition is given in our forthcoming paper \cite{HLL2} from the study of another more theoretical eigenvalue problem. In particular, $\kappa_1=\mu_2$ when the domain is a ball, rectangle or equilateral triangle.
\end{remark}

Now we are ready to prove Theorem \ref{q2main}.

\begin{proof}[Proof of Theorem \ref{q2main}]
Let $u_m$ be as in the hypothesis of Theorem \ref{q2main}, that is, $u_m$ is a function where the following infimum is attained:
\begin{align}
    \label{lambdam'}
\lambda_m(\Omega)=\inf\Big\{\frac{\int_{\Omega}|\nabla u|^2dx+\frac{1}{m}\left(\int_{\partial \Omega}|u|d\sigma\right)^2}{\int_{\Omega}u^2dx}:u\in H^1(\Omega)\Big\}.
\end{align}
We may assume that $u_m \ge 0$.

By Lemma \ref{com}, $\kappa_1(\Omega)\le \mu_2(\Omega)$, and then by Friedlander's Theorem, $\kappa_1(\Omega)$ is strictly less than $\lambda_D(\Omega)$, the first eigenvalue of Dirichlet Laplacian. This is also a consequence of Faber-Krahn inequality and Szeg\"o-Weinberger inequality. Since as $m\rightarrow 0$, $\lambda_m(\Omega)$ tends to $\lambda_D(\Omega)$, and as $m\rightarrow \infty$, $\lambda_m(\Omega)$ tends to $0$, we can find $m_0>0$ such that $\lambda_{m_0}(\Omega)=\kappa_1(\Omega)$.

Let $w$ be the function such that the infimum in \eqref{kappa1} is achieved. WLOG we assume that $\int_{\Omega}u_m^2\, dx=\int_{\Omega}w^2\, dx=1$. We argue by contradiction. Suppose that $u_m>0$ everywhere on $\partial \Omega$ for some $m<m_0$, then consider $u_m+\epsilon w$ as a trial function for \eqref{lambdam'}. Since such $u_m$ satisfies
\begin{align}
    \label{EL2}
\begin{cases}
-\Delta u=\lambda_m(\Omega) u \quad &\mbox{in $\Omega$}\\
\frac{\partial u}{\partial \nu}=-\frac{1}{m}\int_{\partial \Omega}u\, d\sigma &\mbox{on $\partial \Omega$}
\end{cases}    
\end{align}
Since $\Omega$ is $C^1$ or convex, by Proposition \ref{2es}, $u_m \in C^{\alpha}(\bar{\Omega})$ for any $\alpha \in (0,1)$. Since $w$ satisfies
\begin{align}
\label{wequation}
    \begin{cases}
    -\Delta w=\kappa_1(\Omega)w\quad &\mbox{in $\Omega$}\\
    \frac{\partial w}{\partial \nu}=-\frac{\kappa_1(\Omega)}{P(\Omega)}\int_{\Omega}w\, dx\quad &\mbox{on $\partial \Omega$}\\
    \int_{\partial \Omega}w\, d\sigma=0.
    \end{cases}
\end{align}
By Theorem \ref{cite1} and bootstrap argument, $w$ also belongs to $C^{\alpha}(\bar{\Omega})$ for any $\alpha \in (0,1)$. Hence $\epsilon$ can be chosen to be sufficiently small such that $u_m+\epsilon w>0$ on $\partial \Omega$. Let 
\begin{align}
\label{jihao}
    \lambda_m(f,\Omega):=\frac{\int_{\Omega}|\nabla f|^2\,dx+\frac{1}{m}\left(\int_{\partial \Omega}|f|\,d\sigma\right)^2}{\int_{\Omega}f^2\,dx}.
\end{align}
On the one hand, since $u_m$ is a minimizer, $\lambda_m(u_m+\epsilon w, \Omega)\ge \lambda_m(\Omega)$.
On the other hand, by \eqref{EL2} and \eqref{wequation}, we have that $\int_{\Omega}\nabla u_m \nabla w \,dx=\lambda_m(\Omega)\int_{\Omega}u_mw\, dx$. Also, since $\lambda_m(\Omega)>\kappa_1(\Omega)$ as $m<m_0$, we have
\begin{align*}
    \lambda_m(u_m+\epsilon w,\Omega)=&\frac{\int_{\Omega}|\nabla u_m|^2dx+2\epsilon \lambda_m(\Omega) \int_{\Omega}u_mw\,dx+\epsilon^2\int_{\Omega}|\nabla w|^2 \, dx +\frac{1}{m}\left(\int_{\partial \Omega}u_m\,d\sigma\right)^2}{\int_{\Omega}u_m^2dx+2\epsilon \int_{\Omega}u_mw\, dx+\epsilon^2\int_{\Omega}w^2\, dx}\\
    =&\frac{\lambda_m(\Omega)+2\epsilon \lambda_m(\Omega) \int_{\Omega}u_mw\,dx+\epsilon^2\kappa_1(\Omega)}{1+2\epsilon \int_{\Omega}u_mw\, dx+\epsilon^2}\\
    <& \frac{\lambda_m(\Omega)+2\epsilon \lambda_m(\Omega) \int_{\Omega}u_mw\,dx+\epsilon^2\lambda_m(\Omega)}{1+2\epsilon \int_{\Omega}u_mw\, dx+\epsilon^2}=\lambda_m(\Omega).
\end{align*}This leads to a contradiction. Therefore, we conclude that when $m<m_0$, $u_m$ must vanish on a subset of $\partial \Omega$ with positive $\mathscr{H}^{n-1}$ measure.
\end{proof}

The next proposition shows that if the total amount of material is large enough, then the best insulation should cover the whole boundary.
\begin{proposition}
\label{mlarge}
Let $u_m$ be a minimizer to \eqref{2problem} and $\Omega$ is a $C^1$ or convex domain. If $m$ is sufficiently large, then $u_m$ must be vanishing nowhere on $\partial \Omega$.
\end{proposition}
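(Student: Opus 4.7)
The plan is to show that after normalizing by $\int_\Omega u_m^2\,dx=1$ and replacing $u_m$ with $|u_m|$ (still a minimizer of the same Rayleigh quotient), $u_m$ converges uniformly on $\bar\Omega$ to the positive constant $|\Omega|^{-1/2}$ as $m\to\infty$. Once this is known, choosing $m_0$ so large that $\|u_m-|\Omega|^{-1/2}\|_{C^0(\bar\Omega)}<\tfrac12|\Omega|^{-1/2}$ for all $m\ge m_0$ immediately gives $u_m>0$ everywhere on $\partial\Omega$, which is the claim.

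First I would use the constant $|\Omega|^{-1/2}$ as a competitor in \eqref{lambdam'} to get $\lambda_m(\Omega)\le P(\Omega)^2/(m|\Omega|)\to 0$. The Euler--Lagrange equation derived in the proof of Proposition \ref{2es} shows that $u_m$ solves
$$-\Delta u_m=\lambda_m(\Omega)u_m\ \text{in }\Omega,\qquad \tfrac{\partial u_m}{\partial\nu}=g\ \text{on }\partial\Omega,$$
with $\|g\|_{L^\infty(\partial\Omega)}\le\frac{1}{m}\int_{\partial\Omega}u_m\,d\sigma$. Because $\int_\Omega|\nabla u_m|^2\,dx\le\lambda_m(\Omega)\to 0$ and $\int_\Omega u_m^2\,dx=1$, the Sobolev trace embedding yields $\int_{\partial\Omega}u_m\,d\sigma\le C(\Omega)$, so $\|g\|_{L^\infty(\partial\Omega)}\le C(\Omega)/m\to 0$.

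Next, set $\bar u_m=|\Omega|^{-1}\int_\Omega u_m\,dx$ and $v_m=u_m-\bar u_m$. The Poincar\'e inequality $\|v_m\|_{L^2}\le C(\Omega)\|\nabla u_m\|_{L^2}\to 0$, combined with $\|u_m\|_{L^2}=1$ and $u_m\ge 0$, forces $\bar u_m\to |\Omega|^{-1/2}$. To upgrade this convergence to the uniform topology I would apply Theorem \ref{cite1} to the Neumann problem solved by $v_m$ (which has zero mean, and compatibility is automatic since $\int_{\partial\Omega}g\,d\sigma=-\lambda_m(\Omega)\int_\Omega u_m\,dx$). Since $\Omega$ is $C^1$ or convex, every $p\in(1,\infty)$ is admissible, and
$$\|\nabla v_m\|_{L^p(\Omega)}\le C(p,\Omega)\bigl(\lambda_m(\Omega)\|u_m\|_{L^p(\Omega)}+\|g\|_{L^\infty(\partial\Omega)}\bigr)\longrightarrow 0.$$
Picking any $p>n$ and using the Sobolev embedding $W^{1,p}(\Omega)\hookrightarrow C^0(\bar\Omega)$ delivers $u_m\to |\Omega|^{-1/2}$ uniformly on $\bar\Omega$.

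The one technical point is ensuring that the right-hand side of the $W^{1,p}$ estimate genuinely decays in $m$; this requires that $\|u_m\|_{L^p(\Omega)}$ stay bounded uniformly in $m$ for the chosen $p>n$. This is handled by iterating the Sobolev exponent exactly as in the last paragraph of the proof of Proposition \ref{2es}: starting from the uniform bound $\|u_m\|_{L^2}=1$, one successively improves integrability using $\|u_m\|_{L^{p^*}}\le C(\|\nabla v_m\|_{L^p}+|\bar u_m|)$, and the right-hand side of the $W^{1,p}$ estimate is controlled at each stage by $\lambda_m(\Omega)\to 0$ and $\|g\|_{L^\infty(\partial\Omega)}\to 0$. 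After finitely many steps one reaches $p>n$, and the uniform convergence argument closes the proof.
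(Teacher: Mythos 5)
Your proof is correct and follows essentially the same route as the paper: use the Euler--Lagrange equation together with the $W^{1,p}$ estimate of Theorem \ref{cite1} (and Sobolev embedding for $p>n$) to deduce that the normalized minimizers converge uniformly on $\bar\Omega$ to a positive constant as $m\to\infty$. Your version is in fact slightly more careful than the paper's (which writes $\|u_m\|_{W^{1,p}}\le \tfrac{1}{\sqrt m}C(p,\Omega)$, impossible as stated since $\|u_m\|_{L^2}=1$): by splitting $u_m=\bar u_m+v_m$ and showing $\|\nabla v_m\|_{L^p}\to 0$ while $\bar u_m\to|\Omega|^{-1/2}$, you identify the exact limit and avoid that imprecision.
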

\begin{proof}
WLOG, we assume that $\int_{\Omega}u_m^2 \, dx=1$. Hence by Proposition \ref{2es}, for any $p\in (1,\infty)$, the $W^{1,p}$ norm of $u_m$ on $\Omega$ is bounded above by a constant depending on $p$, $m$ and $\Omega$. Actually the proof of Proposition \ref{2es} says that 
$$\|u_m\|_{W^{1,p}(\Omega)} \le \frac{1}{\sqrt{m}}C(p,\Omega).$$
Hence as $m \rightarrow \infty$, $u_m$ has uniformly bounded $W^{1,p}(\Omega)$ norm for any fixed $p\in (1,\infty)$. By H\"older embedding, $u_m$ converges uniformly to a limit function. Note that the limit function has to be a nonzero constant, and hence $u_m$ must be nowhere vanishing on $\partial \Omega$ when $m$ is large.
\end{proof}

\begin{remark}
One can see that the proofs of Theorem \ref{q2main} and Proposition \ref{mlarge} go through as long as minimizers are continuous up to the boundary. Hence both Theorem \ref{q2main} and Proposition \ref{mlarge} are also valid for bounded Lipschitz domains in $\mathbb{R}^2$ and $\mathbb{R}^3$. In higher dimensions we believe this is also true, but so far we have not found a suitable argument to valid it.
\end{remark}

%\hfill\\
%In the end of the paper, we present some open questions related to this present work for future research.

%\begin{question}
%Show that $f_s$ given by \eqref{fs} is strictly negative for $s\ge 2$ and any $m \ge m_0:=\frac{2\pi R^2}{\mu_2(B_1)}$. 
%\end{question}

%\begin{question}
%Prove the concentration breaking effect for the eigenvalue problem.
%\end{question}

%\begin{question}
%Establish a formula relating $\lambda_m$ and $m$ for $m<m_0$.
%\end{question}

%%\begin{example}(Eigenstructure of Sectors)
%Let $S_{\alpha}$ be a sector with vertices the origin $O$, $A=(R,\frac{\alpha}{2})$ and $B=(R,-\frac{\alpha}{2})$ in terms of polar coordinates. Let $\gamma_1$ be the arc from $(R,-\alpha/2)$ to $(R,\alpha/2)$, $\gamma_2$ be the line segment $OA$ and $\gamma_3$ be the line segment $OB$.

%Let $u$ be an eigenfunction of Laplacian. Then $u$ has the form \eqref{changyongyuanform}. In order that $\frac{\partial u}{\partial \nu}$ is a constant on $\gamma_1$, the constant has to be zero, and hence $\frac{\partial u}{\partial \nu}$
%\end{example}

\section{On exact value of breaking thresholds for ball domains.}
Let $J_s(z)$ be the Bessel function of first kind with order $s$. Recall that (see \cite{OLBC})
\begin{align}
\label{1}
    J_s'(z)=-J_{s+1}(z)+\frac{sJ_s(z)}{z}
\end{align} and
\begin{align}
\label{2}
  J_s'(z)=J_{s-1}(z)-\frac{sJ_s(z)}{z}.
\end{align}

Before proving Theorem \ref{exact}, we first state a simple lemma in two dimensions, which is the most interesting case.
\begin{lemma}
\label{2bel}
Let $\Omega$ be a bounded smooth simply connected domain in $\mathbb{R}^2$, $m>0$, $\lambda_m=\lambda_m(\Omega)$ and $u_m$ be a nonnegative solution to \eqref{2problem}. Then
\begin{align}
\label{2beljuti}
   ( m\lambda_m-2\pi)\int_{\partial \Omega}u_m\, d\sigma=-m\int_{\partial \Omega}\frac{\partial^2 u_m}{\partial \nu^2}\, d\sigma.
\end{align}
\end{lemma}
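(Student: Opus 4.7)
I would combine the eigenvalue equation satisfied by $u_m$ with the classical boundary decomposition of the Laplacian in a tubular neighborhood of $\partial\Omega$ and the fact that the total curvature of a simple closed plane curve is $2\pi$. Arguing as in the proof of Proposition \ref{2es}, and assuming (which is the regime relevant for locating the breaking threshold $m_0$) that $u_m > 0$ on $\partial\Omega$ so that the Euler--Lagrange condition takes the clean form
\[
-\Delta u_m = \lambda_m u_m \ \text{in } \Omega, \qquad \frac{\partial u_m}{\partial \nu} = -\frac{1}{m}\int_{\partial \Omega} u_m\,d\sigma \ \text{on } \partial \Omega,
\]
one sees in particular that $\partial_\nu u_m$ is the boundary constant $c := -\frac{1}{m}\int_{\partial\Omega} u_m\,d\sigma$. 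Since $\partial\Omega$ is smooth and the Neumann datum is constant, elliptic bootstrap upgrades $u_m$ to be smooth up to the boundary, so the boundary-trace computations below are justified.

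\textbf{Main computation.} In Fermi coordinates near $\partial\Omega$, the Laplacian decomposes on the boundary as
\[
\Delta u_m\big|_{\partial\Omega} = \frac{\partial^2 u_m}{\partial\nu^2} + \kappa\,\frac{\partial u_m}{\partial\nu} + \frac{\partial^2 u_m}{\partial\tau^2},
\]
where $\kappa$ is the signed curvature of $\partial\Omega$ with the outer-normal convention (so that $\kappa\equiv 1$ on the unit circle) and $\partial_\tau$ is the arclength derivative along $\partial\Omega$. I would substitute $\Delta u_m = -\lambda_m u_m$, integrate over $\partial\Omega$, note that $\int_{\partial\Omega}\partial_\tau^2 u_m\,d\sigma = 0$ since $\partial\Omega$ is a closed curve, and invoke Gauss--Bonnet (equivalently, the tangent vector turns by $2\pi$ along a simple closed curve) to get $\int_{\partial\Omega}\kappa\,d\sigma = 2\pi$. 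This yields
\[
-\lambda_m \int_{\partial\Omega} u_m\,d\sigma = \int_{\partial\Omega} \frac{\partial^2 u_m}{\partial\nu^2}\,d\sigma + 2\pi c = \int_{\partial\Omega} \frac{\partial^2 u_m}{\partial\nu^2}\,d\sigma - \frac{2\pi}{m}\int_{\partial\Omega} u_m\,d\sigma,
\]
and multiplying through by $m$ and rearranging is exactly \eqref{2beljuti}.

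\textbf{Main obstacle.} The only delicate point I anticipate is handling the situation in which $u_m$ vanishes on a portion of $\partial\Omega$: then the boundary condition is only a variational inequality and the step ``$\partial_\nu u_m \equiv c$ on $\partial\Omega$'' breaks down. For the intended application I would simply restrict to the pre-breaking regime $u_m>0$ on all of $\partial\Omega$, which is precisely the setting needed to characterize $m_0$ in Theorem \ref{exact}; more generally, one could try to carry the curvature identity through in a distributional sense. Either way, the appearance of $2\pi$ in \eqref{2beljuti}, and thus in the two-dimensional breaking threshold \eqref{1/2}, is a direct manifestation of the Gauss--Bonnet theorem.
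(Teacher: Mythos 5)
Your proof is correct and follows essentially the same route as the paper: the paper also decomposes the Laplacian on $\partial\Omega$ as $\Delta = \partial_{\nu\nu} + H\partial_\nu + \Delta_{\partial\Omega}$, substitutes the eigenvalue equation and the constant Neumann datum, integrates over $\partial\Omega$ (so the tangential term drops), and invokes $\int_{\partial\Omega} H\,d\sigma = 2\pi$ for a simply connected planar domain. Your remark about restricting to the regime $u_m>0$ on $\partial\Omega$ so that \eqref{EL2} holds exactly is a caveat the paper leaves implicit but needs just as much.
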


\begin{proof}
First, by standard regularity theory, $u_m$ is a classical solution to \eqref{EL2}. Since on $\partial \Omega$ we have
\begin{align*}
    \Delta =\frac{\partial^2}{\partial \nu^2}+ H\frac{\partial}{\partial \nu}+\Delta_{\partial \Omega},
\end{align*}where $H$ is the mean curvature on $\partial \Omega$ and $\Delta_{\partial \Omega}$ is the Laplacian Beltrami operator on $\partial \Omega$, we have that
\begin{align}
\label{invH}
   \left( m\lambda_m-\int_{\partial \Omega}H\, d\sigma\right)\int_{\partial \Omega}u_m\, d\sigma=-m\int_{\partial \Omega}\frac{\partial^2 u_m}{\partial \nu^2} \, d\sigma.
\end{align}
Since $\Omega$ is simply connected, $\int_{\partial \Omega}H\, d\sigma=2\pi$. This toegether with \eqref{invH} implies \eqref{2beljuti}.
\end{proof}
We then have the following corollary, which validates Theorem \ref{exact} in two dimensions.
\begin{corollary}
Let $\Omega$ be a disk of radius $R$ in $\mathbb{R}^2$ and $m_0$ be as in Theorem \ref{gongming}. Then 
\begin{align*}
    m_0 \mu_2(\Omega)=2\pi.
\end{align*}
\end{corollary}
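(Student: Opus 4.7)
The plan is to exploit two facts. First, by Theorem \ref{gongming} and continuity of $\lambda_m(\Omega)$ in $m$, the threshold $m_0$ is characterized by $\lambda_{m_0}(\Omega)=\mu_2(\Omega)$, and a nonnegative radial minimizer $u_{m_0}$ persists at $m=m_0$ (obtained as a limit of the radial minimizers guaranteed by Theorem \ref{gongming} for $m>m_0$). Second, on a disk $\Omega=\{|x|<R\}\subset\mathbb{R}^2$, any such radial minimizer is explicitly given (up to a positive constant) by $v(r)=J_0(\sqrt{\mu_2(\Omega)}\,r)$, since this is the unique radial solution of $-\Delta u=\mu_2(\Omega)\,u$ that is bounded at the origin.

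Writing $u_{m_0}(x)=v(|x|)$ and plugging into the Robin-type boundary condition in \eqref{EL2},
\[
v'(R)=-\frac{1}{m_0}\int_{\partial\Omega}v\,d\sigma=-\frac{2\pi R}{m_0}\,v(R),
\]
I would use $J_0'(z)=-J_1(z)$ to obtain the scalar compatibility identity
\[
\sqrt{\mu_2(\Omega)}\,J_1\bigl(\sqrt{\mu_2(\Omega)}\,R\bigr)=\frac{2\pi R}{m_0}\,J_0\bigl(\sqrt{\mu_2(\Omega)}\,R\bigr).
\]

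To extract $m_0\,\mu_2(\Omega)$ from this, I would invoke the classical characterization $\mu_2(\Omega)=(j_{1,1}')^2/R^2$, where $j_{1,1}'$ is the smallest positive zero of $J_1'$. Applying \eqref{2} with $s=1$, i.e.\ $J_1'(z)=J_0(z)-J_1(z)/z$, at $z_0:=j_{1,1}'=\sqrt{\mu_2(\Omega)}\,R$ gives $z_0\,J_0(z_0)=J_1(z_0)$. Substituting this into the displayed identity above (after multiplying both sides by $R$) reduces it to $z_0^2\,J_0(z_0)=(2\pi R^2/m_0)\,J_0(z_0)$. Since $J_0(z_0)\neq 0$ (the zero sets of $J_0$ and of $J_1'$ are disjoint), we may cancel and conclude $m_0\,\mu_2(\Omega)=2\pi$.

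The main technical point to justify is the existence of a nonnegative radial minimizer exactly at the threshold $m=m_0$. This I would establish by a compactness argument along $m\searrow m_0$, using the uniform $W^{1,p}$ bound from Proposition \ref{2es} together with the fact that radial symmetry is preserved under weak convergence in $H^1$; alternatively one can bypass the limiting argument by observing that the Bessel profile $v(r)=J_0(\sqrt{\mu_2(\Omega)}\,r)$ yields a critical point of $\lambda_m(\,\cdot\,,\Omega)$ precisely when the displayed compatibility identity holds, so the unique $m$ for which $\lambda_m(\Omega)=\mu_2(\Omega)$ must satisfy the derived relation. I note that Lemma \ref{2bel} itself is vacuous on the radial minimizer of a disk (both sides of \eqref{2beljuti} vanish), which is why the more quantitative Bessel route is needed here.
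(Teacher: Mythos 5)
Your argument is correct, and it reaches the conclusion by a somewhat different (and arguably more elementary) route than the paper. The paper's proof goes through Lemma \ref{2bel}: it writes $u_m=J_0(\sqrt{\lambda_m}\,r)$ for $m>m_0$, computes $\partial^2_r u_m = -\lambda_m J_1'(\sqrt{\lambda_m}\,r)$, lets $m\to m_0^+$ so that $\lambda_m\to\mu_2$ and $\sqrt{\mu_2}\,R=j_{1,1}'$ kills the right-hand side of \eqref{2beljuti}, and then concludes $m_0\mu_2=2\pi$ because $\int_{\partial\Omega}u_{m_0}\,d\sigma=2\pi R\,J_0(j_{1,1}')\neq 0$. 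You instead impose the Robin boundary condition from \eqref{EL2} directly on the Bessel profile and eliminate $J_1$ via $J_1'(z_0)=0 \Rightarrow z_0 J_0(z_0)=J_1(z_0)$. Both approaches hinge on the same two facts ($\sqrt{\mu_2}\,R=j_{1,1}'$ and $J_0(j_{1,1}')\neq 0$), but yours dispenses with the boundary Laplacian decomposition and yields the constant $2\pi$ from $P(\Omega)=2\pi R$ directly in the Robin normalization, rather than from $\int_{\partial\Omega}H\,d\sigma=2\pi$. Both are valid; the paper's formulation generalizes more transparently to dimension $n$ (which is handled in the subsequent proposition), while yours is shorter in the planar case.

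One point in your write-up should be corrected: the remark that Lemma \ref{2bel} is ``vacuous'' on the radial minimizer of the disk because ``both sides of \eqref{2beljuti} vanish'' misreads how the lemma is used. It is true that, \emph{after} the corollary is proven, both sides equal zero at $m=m_0$; but the logical content of the paper's argument is precisely that the right-hand side $-m\int_{\partial\Omega}\partial^2_\nu u_m\,d\sigma$ tends to $0$ as $m\to m_0^+$ while the factor $\int_{\partial\Omega}u_m\,d\sigma$ stays bounded away from $0$, which forces $m_0\mu_2-2\pi=0$. That is not vacuous --- it is the whole proof. Moreover, for $m>m_0$ strictly, neither side of \eqref{2beljuti} vanishes (since $\sqrt{\lambda_m}\,R<j_{1,1}'$ and hence $J_1'(\sqrt{\lambda_m}\,R)\neq 0$), so the identity genuinely constrains the relation between $m$ and $\lambda_m$ on the radial branch. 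Your Bessel route is a fine alternative, but it is not ``needed'' in the sense you suggest.

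Finally, your concern about the existence of a radial minimizer exactly at $m=m_0$ is handled the same way the paper handles it: one works with the radial minimizers for $m>m_0$, where Theorem \ref{gongming} applies, and passes to the limit $m\to m_0^+$ using continuity of $\lambda_m$ and of the Bessel expressions. Your second suggested bypass (treating the compatibility identity as determining the unique $m$ at which the radial profile $J_0(\sqrt{\mu_2}\,r)$ is a critical point) is also sound and amounts to the same limiting relation evaluated at the endpoint.
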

\begin{proof}
Since as $m>m_0$, $u_m$ is a radial solution, and the solution is unique up to a constant. Thus we may write $u_m=J_0(\sqrt{\lambda_m}r)$. Then by \eqref{1},
\begin{align*}
    \frac{d^2}{dr^2}u_m=-\lambda_m J_1'(\sqrt{\lambda_m}r).
\end{align*}
Since $\lambda_{m_0}=\mu_2$ and $\sqrt{\mu_2} R$ is the first zero of $J_1'$, we have $\partial_{\nu\nu}u_m=0$ on $\partial \Omega$. Hence by Lemma \ref{2bel}, $m_0\mu_2=2\pi$.
\end{proof}

The next proposition validates Theorem \ref{exact} in any dimensions.
\begin{proposition}
Let $\Omega$ is a ball of radius $R$ in $\mathbb{R}^n$ and $m_0$ be as in Theorem \ref{gongming}. Then $m_0$ satisfies \eqref{1/2}.
\end{proposition}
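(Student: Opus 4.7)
Plan: extend the two-step argument used for the two-dimensional Corollary to arbitrary $n$. The first step is an $n$-dimensional analogue of Lemma~\ref{2bel} that links $m_0\mu_2$ to an integrated second normal derivative. Applying the decomposition $\Delta = \partial_{\nu\nu} + H\partial_\nu + \Delta_{\partial B_R}$ to the Euler--Lagrange equation $-\Delta u_m = \lambda_m u_m$ on $\partial B_R$, where $H=(n-1)/R$ is constant, and integrating over $\partial B_R$ (so that $\int_{\partial B_R}\Delta_{\partial B_R}u_m\,d\sigma=0$), together with the fact that $\partial_\nu u_m=-\tfrac{1}{m}\int_{\partial B_R}u_m\,d\sigma$ is constant, yields
$$\bigl(m\lambda_m - HP\bigr)\int_{\partial B_R}u_m\,d\sigma = -m\int_{\partial B_R}\partial_{\nu\nu}u_m\,d\sigma.$$
Since $P=n|\Omega|/R$ on the ball, $HP = \tfrac{n-1}{n}\tfrac{P^2}{|\Omega|}$, precisely the constant appearing in \eqref{1/2}.

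By Theorem~\ref{gongming}, the minimizer $u_m$ is radial for $m>m_0$, and I pass to the limit $m\downarrow m_0$ using the uniform bounds from Proposition~\ref{2es} to produce a radial minimizer at $m=m_0$. Up to scaling,
$$u_m(r) = r^{-\nu}J_\nu(\sqrt{\lambda_m}\,r),\qquad \nu=\tfrac{n-2}{2}.$$
The identity $(z^{-\nu}J_\nu(z))'=-z^{-\nu}J_{\nu+1}(z)$, a consequence of \eqref{1}, gives $u_m'(R)=-kR^{-\nu}J_{\nu+1}(kR)$ with $k=\sqrt{\lambda_m}$, and plugging into the radial Helmholtz ODE $u_m''+\tfrac{n-1}{r}u_m'+\lambda_m u_m=0$ at $r=R$ produces
$$u_m''(R) = \frac{kR^{-\nu}}{R}\bigl[(n-1)J_{\nu+1}(kR)-kR\,J_\nu(kR)\bigr].$$
To identify $\mu_2(B_R)$, I use that the second Neumann eigenfunctions are of the form $v(x)=x_i g(r)$, where separation of variables gives $g(r)=r^{-n/2}J_{n/2}(\sqrt{\mu}\,r)$, and $\partial_\nu v|_{\partial B_R}=0$ reduces to $J_{n/2}(z)=z\,J_{n/2+1}(z)$ at $z=\sqrt{\mu_2}\,R$. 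Applying the three-term recurrence $J_{\mu-1}(z)+J_{\mu+1}(z)=\tfrac{2\mu}{z}J_\mu(z)$ (a consequence of \eqref{1} and \eqref{2}) with $\mu=\nu+1=n/2$ rewrites this as $(n-1)J_{\nu+1}(\sqrt{\mu_2}R) = \sqrt{\mu_2}R\,J_\nu(\sqrt{\mu_2}R)$, which is exactly the vanishing of the bracket in the formula for $u_m''(R)$ at $\lambda_{m_0}=\mu_2$. Hence $\partial_{\nu\nu}u_{m_0}\equiv 0$ on $\partial B_R$; since consecutive-order Bessel functions have no common positive zero, $J_\nu(\sqrt{\mu_2}R)\neq 0$ and thus $\int_{\partial B_R}u_{m_0}\,d\sigma\neq 0$, so the identity of the first paragraph gives $m_0\mu_2 = HP = \tfrac{n-1}{n}\tfrac{P^2}{|\Omega|}$.

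The main obstacle is the algebraic alignment at the crux: the single Bessel equation characterizing the non-radial eigenfunctions responsible for $\mu_2(B_R)$ has to turn, after one step of the Bessel recurrence, into the vanishing of $u_{m_0}''(R)$ for the \emph{radial} minimizer. Once this coincidence is observed, the boundary decomposition of $\Delta$ and the geometric identity $P=n|\Omega|/R$ on the ball finish the proof mechanically.
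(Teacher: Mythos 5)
Your proof is correct and follows essentially the same strategy as the paper: extend the integrated boundary decomposition $\Delta = \partial_{\nu\nu} + H\partial_\nu + \Delta_{\partial\Omega}$ (the general-$n$ identity \eqref{invH} from the proof of Lemma \ref{2bel}) to reduce the question to showing $\partial_{\nu\nu}u_{m_0}\equiv 0$ on $\partial B_R$, and then verify this via Bessel identities at $\lambda_{m_0}=\mu_2$. The differences are in execution: the paper differentiates $r^{1-n/2}J_{n/2-1}(\sqrt{\lambda_m}r)$ directly twice and cites Henrot's characterization of $p_{n/2,1}=\sqrt{\mu_2}R$ as the first positive zero of $(t^{1-n/2}J_{n/2}(t))'$, whereas you use the radial Helmholtz ODE to express $u_m''(R)$ in terms of $u_m(R)$ and $u_m'(R)$, rederive the Neumann-eigenvalue condition $J_{n/2}(z)=zJ_{n/2+1}(z)$ from separation of variables, and then use the three-term recurrence to match it with the vanishing of the bracket $(n-1)J_{\nu+1}(kR)-kRJ_\nu(kR)$. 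Your route is more self-contained and has the merit of making explicit the nondegeneracy $\int_{\partial B_R}u_{m_0}\,d\sigma\neq 0$ (via the fact that consecutive-order Bessel functions share no positive zero), a point the paper silently assumes when canceling $\int_{\partial\Omega}u\,d\sigma$ in \eqref{invH}. Both approaches buy essentially the same result; yours trades a reference to the literature for a short algebraic verification.
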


\begin{proof}
Let $u_m=r^{1-\frac{n}{2}}J_{\frac{n}{2}-1}(\sqrt{\lambda_m}r)$. Hence when $m>m_0$, such $u_m$ is a solution to \eqref{2problem}. 
First, we claim that on $\partial \Omega$,
\begin{align}
\label{relation}
    \lim_{m \rightarrow m_0^+} \frac{\partial^2 u_m}{\partial r^2}=0
\end{align}To prove this, note that
\begin{align*}
    \frac{\partial u_m}{\partial r}=(1-\frac{n}{2})r^{-\frac{n}{2}}J_{\frac{n}{2}-1}(\sqrt{\lambda_m}r)+\sqrt{\lambda_m} r^{1-\frac{n}{2}}J_{\frac{n}{2}-1}'(\sqrt{\lambda_m}r)=-\sqrt{\lambda_m}r^{1-\frac{n}{2}}J_{\frac{n}{2}}(\sqrt{\lambda_m}r)
\end{align*}and
\begin{align*}
    \frac{\partial^2 u_m}{\partial r^2}=-\sqrt{\lambda_m}(1-\frac{n}{2})r^{-\frac{n}{2}}J_{\frac{n}{2}}(\sqrt{\lambda_m}r)-\lambda_mr^{1-\frac{n}{2}}J_{\frac{n}{2}}'(\sqrt{\lambda_m}r).
\end{align*}
Hence we have \eqref{relation}, since
  \begin{align*}
\left(1-\frac{n}{2}\right)J_{\frac{n}{2}}(\sqrt{\lambda_{m_0}}R)+\sqrt{\lambda_{m_0}}RJ_{\frac{n}{2}}'(\sqrt{\lambda_{m_0}}R)=&\left(1-\frac{n}{2}\right)J_{\frac{n}{2}}(\sqrt{\mu_2(\Omega)}R)+\sqrt{\lambda_{m_0}}RJ_{\frac{n}{2}}'(\sqrt{\mu_2(\Omega)}R)\\
   = &0,
\end{align*}which follows from the fact that $p_{\frac{n}{2},1}:=\sqrt{\mu_2(\Omega)}R$ is the first positive zero of the derivative of the derivative of $t \mapsto t^{1-\frac{n}{2}}J_{\frac{n}{2}}(t)$, see for example \cite[Section 7.4.1]{nshap}.

Now by \eqref{invH} and \eqref{relation}, we have
\begin{align*}
   \left( m_0\mu_2-\frac{n-1}{R}P(\Omega)\right)\int_{\partial \Omega}u\, d\sigma=0.
\end{align*}
Therefore,
\begin{align*}
    m_0\mu_2=\frac{n-1}{R}P(\Omega)=\frac{n-1}{n}\frac{P^2(\Omega)}{|\Omega|}.
\end{align*}
\end{proof}

\section{Comments on solutions when $m \rightarrow 0$}
It seems to be a very challenging task to find out optimal distribution of thermal insulation material when the total amount of material is less than the thresholds $m_1$ and $m_0$ in both problems. Theorem \ref{concentration} and Theorem \ref{q2main} only tell the insulator must vanish on some portion of boundary when $m$ is below the thresholds, but there reveals no information at all on where exactly the thermal insulation material should vanish or concentrate on the boundary.

Nevertheless, for the first problem, as $m\rightarrow 0^+$, the asymptotic distribution of thermal insulation material is known. By the $\Gamma$-convergence framework, it is shown in \cite{EG03} that if $\Omega$ is a $C^2$ domain, then the material should concentrate on the location of $\partial \Omega$ where heat changes fastest. More precisely, the location is given by 
\begin{align}
    \label{location}
\Big\{p \in \partial \Omega: \Big|\frac{\partial u}{\partial \nu}(p)\Big|=\max_{\sigma \in \partial \Omega}\Big|\frac{\partial u}{\partial \nu}(\sigma)\Big| \Big\},    
\end{align}
where $u$ is a solution to \begin{align}
\label{uni0}
\begin{cases}
-\Delta u=1,\quad &\mbox{in $\Omega$}\\
u=0\quad &\mbox{on $\partial \Omega$}.
\end{cases}
\end{align}
The equation \eqref{uni0} describes the stationary temperature inside the domain when the heat source is uniform, the outside temperature is $0$ and there is no insulator. Hence the result in \cite{EG03} really meets the usual intuition. 

Concerning the second problem, let $\lambda_m(\Omega)$ be the infimum in \eqref{2problem}. Since $\lambda_m(\Omega)$ converges to $\lambda_D(\Omega)$, the first eigenvalue of Dirichlet Laplacian, we should consider 
\begin{align}
\label{uni1}
\begin{cases}
-\Delta u=\lambda_D(\Omega) u,\quad &\mbox{in $\Omega$}\\
u=0\quad &\mbox{on $\partial \Omega$}.
\end{cases}
\end{align}
In fact, given an initial positive temperature in $\Omega$ and let the outside temperature be zero, then $\lambda_D(\Omega)$ really gives the lowest decay rate of temperature when there are no heat source inside and no insulator around. One may expect that as $m\rightarrow 0^+$, the optimal insulation also concentrates on the location of boundary described by \eqref{location}, where $u$ is a solution to \eqref{uni1}. It turns out that this is false, because when $\Omega$ is a ball, any solution to \eqref{uni1} is radial, and hence \eqref{location} is the whole boundary, which contradicts to Theorem \ref{gongming}. The question still remains open on where the insulator should asymptotically behave as $m \rightarrow 0$, and it would be interesting to provide an answer.

\section{An open question on Serrin's symmetry results}
As mentioned in the introduction, the two thermal insulation problems not only have concrete physical background, but also produces rich mathematical questions. For example, a quantitative study of concentration breaking in the second problem could possibly be related to new isoperimetric type inequalities. In this section, we propose another open questions, which look very simple but seem to have not been addressed in previous literature.

 Recall that Serrin's overdetermined result classifies $C^2$ critical shapes to torsion energy problem with prescribed volume. More precisely, if $\Omega$ is $C^2$ and there is a solution $u$ such that
\begin{align}
    \label{serrinequation}
\begin{cases}
-\Delta u=1 \quad &\mbox{in $\Omega$}\\
\frac{\partial u}{\partial \nu}=c\quad & \mbox{on $\partial \Omega$}\\
u=0\quad & \mbox{on $\partial \Omega$}
\end{cases}    
\end{align}
Then Serrin proves in \cite{Serrin} that $\Omega$ must be a ball. The proof is by moving plane method, and it validity requires the $C^2$ regularity of domain. Weinberger gave a different proof in \cite{We71} by obtaining integral identities and maximum principle, but still the proof requires the solution is at least in $H^2$ space. 

The following question is motivated in the study of extending Theorem \ref{concentration} to general Lipschitz domains. 

\begin{question}
\label{fundaserrin}
Let $\Omega$ be a bounded Lipschitz domain in $\mathbb{R}^n$ and $u$ be a weak solution to Serrin's overdetermined system \eqref{serrinequation} on $\Omega$. Then is it true that $\Omega$ must be a ball? 
\end{question}
By $u$ being a weak solution to \eqref{serrinequation}, we mean that $u \in H^1_0(\Omega)$ and for any $\phi \in H^1(\Omega)$, we have
\begin{align*}
    \int_{\Omega}\nabla u \nabla \phi \, dx-c\int_{\partial \Omega}\phi \, d\mathscr{H}^{n-1}=\int_{\Omega}\phi \, dx.
\end{align*}
Even if $\Omega$ is $C^1$, regularity results (see for example \cite{FJR78}) can only guarantee that $u \in W^{1,p}_0(\Omega)$ for any $p \in (1,\infty)$, and $H^2$ regularity of solution is missing.

\begin{remark}
Weak solution satisfying $\eqref{serrinequation}_1-\eqref{serrinequation}_2$ always exists on a Lipschitz domain, by Poincar\'e inequality. Hence Question \ref{fundaserrin} really asks whether there is a nonradial domain on which the weak solution satisfying $\eqref{serrinequation}_1-\eqref{serrinequation}_2$ is nontrivial and has trace zero on $\partial \Omega$.
\end{remark}

Extending the symmetry results of Serrin to weak solutions on nonsmooth domains seems to be a very difficult question, and it is not even trivial for $\Omega$ which is $C^2$ everywhere except for a corner or cusp. The latter case is partially dealt with in \cite{Pra98}, where it is shown that if $u$ satisfies \eqref{serrinequation} pointwise except at a corner or cusp on the boundary, and if $\Omega$ satisfies the interior or exterior ball condition, then $\Omega$ must be a ball.

%%%%%%%%%%%%%%%%%%%%%%%%%%%%%%%%%%%%%%%%%%%%%%%%%%%%%%%%%%%%%%%%%%%%%%

%%%%%%%%%%%%%%%%%%%%%%%%%%%%%%%%%%%%%%%%%%%%%%%%%%%%%%%
%%% Acknowledgements. ÖÂÐ»
%%%%%%%%%%%%%%%%%%%%%%%%%%%%%%%%%%%%%%%%%%%%%%%%%%%%%%%

\section*{Acknowledgement}
We would like to thank Professor Carlos Kenig for letting us know his very profound paper \cite{KP93} written with Pipher and suggesting the argument in Remark \ref{Kenigpoint}. We would also like to thank the anonymous referees for giving us many useful suggestions for making the paper much better.

\bigskip
\bigskip

\end{document}